\newcommand{\FF}{\protect{\mathcal F}}
\newcommand{\DD}{\protect{\mathcal D}}
\newcommand{\vf}{\varphi}
\def\cC{{{\mathcal C}}}
\def\cF{{{\mathcal F}}}
\newcommand\eps{\ensuremath{\varepsilon}}
\newcommand{\impli}{\Rightarrow}
\newcommand{\Nat}{\mathbb{N}}
\newcommand{\erre}{\mathbb{R}}
\newtheorem{theo}{Theorem}[section]
\newtheorem{lem}[theo]{Lemma}
\newtheorem{pro}[theo]{Proposition}
\newtheorem{cor}[theo]{Corollary}
\newtheorem{defi}[theo]{Definition}
\newtheorem{rem}[theo]{Remark}
\newtheorem{prop}[theo]{Proposition}
\def\epsilon{\varepsilon}
\providecommand{\bysame}{\leavevmode\hbox to3em{\hrulefill}\thinspace}
\providecommand{\MR}{\relax\ifhmode\unskip\space\fi MR }
\providecommand{\MRhref}[2]{%
  \href{http://www.ams.org/mathscinet-getitem?mr=#1}{#2}
}
\providecommand{\href}[2]{#2}
\title{The McShane integral in weakly compactly generated spaces}
\author{ A. Avil\'{e}s}
\address{Departamento de Matem\'{a}ticas\\
Facultad de Matem\'{a}ticas\\ Universidad de Murcia\\ 30100 Espinardo (Murcia)\\
Spain} \email{avileslo@um.es}
\author{G. Plebanek}
\address{Mathematical Institute\\ University of Wroc\l aw\\ Pl.\ Grunwaldzki 2/4\\
50-384 Wroc\-\l aw\\ Poland} \email{grzes@math.uni.wroc.pl}
\author{J. Rodr\'{i}guez}
\address{Departamento de Matem\'{a}tica Aplicada\\
Facultad de Inform\'{a}tica\\ Universidad de Murcia\\ 30100 Espinardo (Murcia)\\
Spain} \email{joserr@um.es}
\subjclass[2000]{28B05, 46G10}
\keywords{Pettis integral; McShane integral; scalarly
null function; filling family}
\thanks{A.~Avil\'{e}s and J.~Rodr\'{i}guez were supported by MEC and FEDER (Project MTM2008-05396)
and Fundaci\'{o}n S\'{e}neca (Project 08848/PI/08). A. Avil\'{e}s was 
supported by Ramon y Cajal contract (RYC-2008-02051). 
G.~Plebanek wishes to thank A.~Avil\'{e}s, B.~Cascales and J.~Rodr\'{i}guez
for their hospitality during his stay in Murcia in
February 2009; the visit was supported by Departamento de Matem\'{a}ticas,
Universidad de Murcia.}
\begin{document}

\begin{abstract}
Di Piazza and Preiss asked whether every Pettis integrable
function defined on $[0,1]$ and taking values in a weakly
compactly generated Banach space is McShane integrable. In this
paper we answer this question in the negative. Moreover, we give 
a counterexample where the target Banach space is reflexive.
\end{abstract}

\maketitle

\section{Introduction}

The classical Pettis' measurability theorem \cite{Pettis} ensures that scalar and strong measurability
are equivalent for functions taking values in separable Banach spaces. This fact has many interesting consequences in vector integration. For instance, it is a basic tool to prove that Pettis and McShane integrability coincide in separable Banach spaces, \cite{fre5,fre-men,gor}. However, for non-separable Banach spaces the notions of scalar and strong measurability are different in general. This leads to subtle problems when trying to compare different types of integrals.

In this paper we deal with the Pettis and McShane integrals. Di
Piazza and Preiss~\cite{dip-pre} asked whether every Pettis
integrable function $f:[0,1]\to X$ is McShane integrable if $X$ is
a weakly compactly generated (WCG) Banach space. Recently, Deville
and the third author~\cite{dev-rod} have proved that the answer is
affirmative when $X$ is Hilbert generated, thus improving the
previous results obtained in~\cite{dip-pre,Rod10}. Our main
purpose here is to show that the question of Di Piazza and Preiss
has negative answer in general.

The paper is organized as follows.

In Section~\ref{SectionMcShane2} we introduce the MC-integral for Banach space-valued functions defined on probability spaces. This auxiliary tool is used as a substitute of the McShane integral at some stages. We prove that, for functions defined on quasi-Radon probability spaces, MC-integrability always implies McShane integrability (Proposition~\ref{Mc2impliesMc}), while the converse holds if the topology on the domain has a countable basis (Proposition~\ref{McimpliesMc2}). This approach allows us to give a partial answer (Corollary~\ref{CorollaryFremlinProblemMcShane}) to a question posed by Fremlin in \cite[4G(a)]{fre5}.

In Section~\ref{SectionMeasureFilling} we show that the existence of scalarly null (hence Pettis integrable) WCG-valued functions which are not McShane integrable is strongly related to the existence of families of finite sets which are ``measure filling'' in the sense of the following definition. Throughout the paper
$(\Omega,\Sigma,\mu)$ is a probability space and we use the symbol $[S]^{<\omega}$
to denote the family of all finite subsets of a given set~$S$.

\begin{defi}\label{DefinitionMCfilling}
A family $\mathcal{F} \subset [\Omega]^{<\omega}$ is called {\em MC-filling} on~$\Omega$ if it is hereditary
and there exists $\eps>0$ such that for every countable partition $(\Omega_m)$ of $\Omega$ there is $F\in\mathcal{F}$ such that
$$
    \mu^\ast\left( \bigcup\{\Omega_m : \, F\cap \Omega_m\neq\emptyset \}\right) > \varepsilon,
$$
where $\mu^\ast$ is the outer measure induced by~$\mu$.
\end{defi}

This concept should be viewed as a measure-theoretic analogue of
the notion of $\varepsilon$-filling families, defined as follows:

\begin{defi}\label{DefinitionFilling}
Let $\varepsilon>0$. A family~$\mathcal{F}\subset [S]^{<\omega}$ is called
{\em $\varepsilon$-filling}
on the set~$S$ if it is hereditary and for every
$H\in [S]^{<\omega}$ there is $F\in\mathcal{F}$ with $F\subset H$
and  $|F|\geq\varepsilon|H|$.
\end{defi}

The following problem by Fremlin~\cite{freDU} remains open:

\medskip
\noindent {\bf Problem DU.}
{\em Are there compact $\varepsilon$-filling families on uncountable sets?} 

\medskip
However, we show that
compact MC-filling families on $[0,1]$ can be constructed from some weaker versions of filling families that Fremlin proved
to exist (Theorem~\ref{PropositionGrzegorz}). This leads to our
main result:

\medskip
{\noindent {\bfseries Theorem~\ref{TheoremDP}}.\it{}
There exist a WCG Banach space~$X$ and a scalarly null function
$f:[0,1]\to X$ which is not McShane integrable.
}

\medskip
In fact, the space~$X$ can be taken reflexive (Theorem~\ref{CorollaryReflexive}). Observe that Theorem~\ref{TheoremDP} also answers in the negative the question (attributed to Musia{\l} in~\cite{dip-pre}) whether every scalarly null Banach space-valued function on~$[0,1]$ is McShane integrable. In \cite{dip-pre,Rod10} two counterexamples had been constructed under the Continuum Hypothesis
(where the target spaces are non-WCG spaces). We emphasize that all results in this paper are in ZFC.

In Section~\ref{SectionFilling} we prove that if a family $\cF \subset [A]^{<\omega}$ 
is $\varepsilon$-filling on a set $A \subset \Omega$ of positive outer measure then it
is MC-filling on~$\Omega$.

Finally, in Section~\ref{SectionRemarks} we provide an example of
a McShane integrable function which is not MC-integrable
(Theorem~\ref{PropositionMcShaneNoMcShane2}). Our example also
makes clear that, in general, the results on the coincidence of
Pettis and McShane integrability of~\cite{dev-rod,dip-pre} do not
hold when McShane integrability is replaced by MC-integrability.

\subsection*{Terminology}
Our standard references are \cite{die-uhl-J,tal} (vector integration) and \cite{FreMT-4} (topological measure theory).
By a \emph{partition} of a set~$S$ we mean a collection of pairwise disjoint (maybe empty) subsets whose union is~$S$. 
We say that a set is \emph{countable} if it is either finite or countably infinite.
The symbol $|S|$ stands for the cardinality of a set~$S$. A family $\mathcal{F} \subset [S]^{<\omega}$ is 
called \emph{hereditary} if $G\in \cF$ whenever $G\subset F \in \cF$. A family $\cF \subset [S]^{<\omega}$ is called \emph{compact} if the
set $\{1_A:A\in \cF\}$ is compact in~$2^S$ equipped with the product topology. 
Here we write $1_A$ to denote 
the function on~$S$ defined by $1_A(s)=1$ if $s\in A$ and $1_A(s)=0$ if $s\not\in A$.
It is well known that a hereditary family $\FF \subset [S]^{<\omega}$ is compact if and only if 
every set $A\subset S$ with $[A]^{<\omega} \subset \FF$ is finite.
We say that a family $\mathcal{E} \subset \Sigma$ is \emph{$\eta$-thick}
(for some $\eta>0$) if $\mu(\Omega \setminus \bigcup \mathcal{E})\leq \eta$.

Throughout the paper $X$ is a (real) Banach space. The norm of~$X$ is denoted by~$\|\cdot\|$ if it is needed explicitly.
We denote by $X^*$ the topological dual of~$X$ and put $B_{X}=\{x\in X: \|x\|\leq 1\}$. The space $X$ is called
\emph{WCG} if there is a weakly compact subset of~$X$ whose linear span is dense in~$X$. A
function $f:\Omega \to X$ is called \emph{scalarly null} if, for each $x^*\in X^*$, the composition $x^*f:\Omega \to \erre$ vanishes $\mu$-a.e.
(the exceptional set depending on~$x^*$).

If $\mathfrak{T}\subset \Sigma$ is a topology
on~$\Omega$, we say that $(\Omega,\mathfrak{T},\Sigma,\mu)$ is a \emph{quasi-Radon} probability space (following
\cite[Chapter~41]{FreMT-4}) if $\mu$ is complete, inner regular with respect to closed sets, and $\mu(\bigcup \mathcal{G})=\sup\{\mu(G):
G\in \mathcal{G}\}$ for every upwards directed family $\mathcal{G}\subset
\mathfrak{T}$. A \emph{gauge} on~$\Omega$ is a function $\delta:\Omega \to \mathfrak{T}$ such that
$t\in \delta(t)$ for all $t\in \Omega$. Every Radon probability space is quasi-Radon \cite[416A]{FreMT-4}.

The vector-valued McShane integral was first studied in \cite{fre-men,gor} for functions defined on~$[0,1]$ equipped with the Lebesgue measure. Fremlin~\cite{fre5} extended the theory  to deal with functions defined on arbitrary quasi-Radon probability spaces. We next recall an alternative way of defining the McShane integral taken from \cite[Proposition~3]{freET}.

\begin{defi}\label{DefinitionMcShane}
Suppose $(\Omega,\mathfrak{T},\Sigma,\mu)$ is quasi-Radon. A function $f:\Omega \to X$ is said to be
{\em McShane integrable}, with integral $x\in X$, if for every $\epsilon>0$ there exist $\eta>0$ and a gauge $\delta$ on~$\Omega$
such that: for every $\eta$-thick finite family $(E_i)$ of pairwise disjoint measurable sets and
every choice of points $t_i\in \Omega$ with $E_i \subset \delta(t_i)$, we have
$$
    \Bigl\|\sum_i \mu(E_{i})f(t_i)-x\Bigr\| \leq \epsilon.
$$
\end{defi}

Every McShane integrable function is also Pettis
integrable (and the corresponding integrals coincide), \cite[1Q]{fre5}.
The converse does not hold in general, see \cite{dev-rod,dip-pre,fre-men,Rod10} for examples.

\section{Another look at the McShane integral}\label{SectionMcShane2}

We next introduce a variant of the McShane integral that is defined in terms of the measure space only,
without any reference to a topology.

\begin{defi}\label{DefinitionMCIntegral}
A function $f:\Omega \to X$ is called {\em MC-integrable}, with integral $x\in X$, if for every
$\epsilon>0$ there exist $\eta>0$, a countable partition $(\Omega_m)$ of $\Omega$ and sets $A_m\in \Sigma$ with $\Omega_m\subset A_m$, such that: for every $\eta$-thick finite family $(E_i)$ of pairwise disjoint elements of~$\Sigma$ with $E_i\subset A_{m(i)}$ and
every choice of points $t_i\in \Omega_{m(i)}$, we have
$$
    \Bigl\|\sum_i \mu(E_i)f(t_i)-x\Bigr\| \leq \epsilon.
$$
\end{defi}

Clearly, given $\eta>0$, a countable partition $(\Omega_m)$ of~$\Omega$ and sets $\Omega_m  \subset A_m\in \Sigma$, we can always find families $(E_i)$ as in Definition~\ref{DefinitionMCIntegral}. It is routine to check that the vector~$x$ in Definition~\ref{DefinitionMCIntegral} is unique.

The relationship between the MC-integral and the McShane integral is analyzed in the following two propositions.

\begin{prop}\label{Mc2impliesMc}
Suppose $(\Omega,\mathfrak{T},\Sigma,\mu)$ is quasi-Radon.
If $f:\Omega\to X$ is MC-integrable, then it is McShane integrable (and the corresponding integrals coincide).
\end{prop}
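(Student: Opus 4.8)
The plan is to build a McShane gauge directly out of the data supplied by MC-integrability; the only real issue is that the MC-partition pieces $\Omega_m$ need not be open, or even measurable, whereas a gauge must be open-valued. Fix $\epsilon>0$ and apply Definition~\ref{DefinitionMCIntegral} with $\epsilon/2$ in place of $\epsilon$, obtaining $\eta>0$, a countable partition $(\Omega_m)$ of $\Omega$, and sets $A_m\in\Sigma$ with $\Omega_m\subseteq A_m$. Since the partition in Definition~\ref{DefinitionMCIntegral} is completely arbitrary, I would first refine it along the level sets of $\|f(\cdot)\|$: put $\Omega^{(n)}=\{t\in\Omega:\ n-1\le\|f(t)\|<n\}$ for $n\ge1$, and set $\Theta_{m,n}=\Omega_m\cap\Omega^{(n)}$, $B_{m,n}=A_m$. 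Because $\Theta_{m,n}\subseteq\Omega_m$ and $B_{m,n}=A_m$, every family admissible for $(\eta,(\Theta_{m,n}),(B_{m,n}))$ is admissible for the original MC-data, so $(\eta,(\Theta_{m,n}),(B_{m,n}))$ still witnesses MC-integrability with constant $\epsilon/2$; the payoff is that $\|f\|<n$ on $\Theta_{m,n}$.

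Next I would enlarge each $A_m$ to an open set. A quasi-Radon measure is inner regular with respect to closed sets, and since $\mathfrak{T}\subseteq\Sigma$ and $\mu(\Omega)=1$ this is equivalent, by complementation, to outer regularity with respect to open sets. Hence I can choose, for each pair $(m,n)$, an open set $B'_{m,n}\supseteq A_m$ with $\mu(B'_{m,n}\setminus A_m)<\gamma_{m,n}$, where the $\gamma_{m,n}>0$ are fixed beforehand so that $\sum_{m,n}\gamma_{m,n}<\eta/2$ and $\sum_{m,n}n\,\gamma_{m,n}<\epsilon/2$ (possible, e.g.\ taking $\gamma_{m,n}=\theta\,2^{-m-2n}$ with $\theta$ small). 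Define the gauge $\delta$ by $\delta(t)=B'_{m,n}$ for $t\in\Theta_{m,n}$: this is well defined because $(\Theta_{m,n})$ partitions $\Omega$, and it is a gauge because $B'_{m,n}$ is open and $t\in\Theta_{m,n}\subseteq A_m\subseteq B'_{m,n}$. I claim $\delta$ together with $\eta':=\eta/2$ witnesses McShane integrability of $f$ with the same integral $x$.

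For the verification, let $(E_i)$ be an $\eta'$-thick finite family of pairwise disjoint members of $\Sigma$ with tags $t_i$ satisfying $E_i\subseteq\delta(t_i)$, and let $(m(i),n(i))$ be the index with $t_i\in\Theta_{m(i),n(i)}$, so $E_i\subseteq B'_{m(i),n(i)}$. Replace each $E_i$ by $E'_i=E_i\cap A_{m(i)}$. Grouping the indices $i$ by the value of $(m(i),n(i))$ and using pairwise disjointness, $\sum_i\mu(E_i\setminus A_{m(i)})\le\sum_{m,n}\mu(B'_{m,n}\setminus A_m)<\eta/2$, so $(E'_i)$ is $\eta$-thick; moreover $E'_i\subseteq A_{m(i)}=B_{m(i),n(i)}$ and $t_i\in\Theta_{m(i),n(i)}$, so MC-integrability gives $\|\sum_i\mu(E'_i)f(t_i)-x\|\le\epsilon/2$. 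Since $\|f(t_i)\|<n(i)$, the same grouping gives
$$
\Bigl\|\sum_i\mu(E_i)f(t_i)-\sum_i\mu(E'_i)f(t_i)\Bigr\|
\le\sum_i\mu(E_i\setminus A_{m(i)})\,n(i)
\le\sum_{m,n}n\,\gamma_{m,n}<\frac{\epsilon}{2},
$$
and the triangle inequality yields $\|\sum_i\mu(E_i)f(t_i)-x\|\le\epsilon$, as required.

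The one step that needs genuine care, and which I expect to be the crux, is the refinement along $\{n-1\le\|f\|<n\}$ at the very start: without the resulting bound $\|f\|<n$ on $\Theta_{m,n}$, the discarded mass $\sum_i\mu(E_i\setminus A_{m(i)})\|f(t_i)\|$ is uncontrollable, since $f$ may be unbounded and a tag $t_i$ can sit where $\|f\|$ is arbitrarily large. It is exactly the freedom to take the MC-partition non-measurable that makes this refinement cost nothing. Everything else — the equivalence of inner regularity by closed sets with outer regularity by open sets, the fact that $\delta$ is a legitimate gauge, and the two summability requirements on the $\gamma_{m,n}$ — is routine.
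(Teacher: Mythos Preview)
Your argument is correct and follows essentially the same route as the paper: refine the MC-partition along the norm level sets $\{n-1\le\|f\|<n\}$, enlarge each $A_m$ to an open set with controlled excess measure (using outer regularity, which follows from inner regularity by closed sets on a finite measure space), define the gauge from these open sets, and then compare $E_i$ with $E_i\cap A_{m(i)}$ via the two summability constraints on the errors. The only cosmetic differences are that the paper writes the explicit bound $\frac{1}{2^{m+n}}\min\{\epsilon/n,\eta/2\}$ where you use abstract $\gamma_{m,n}$, and the paper applies MC-integrability directly with $\epsilon$ (ending with $2\epsilon$) rather than starting with $\epsilon/2$.
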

\begin{proof}
Let $x \in X$ be the MC-integral of $f$ and fix $\varepsilon>0$. Since $f$ is MC-integrable, there exist $\eta>0$,
a countable partition $(\Omega_m)$ of $\Omega$ and measurable sets $A_m\supset \Omega_m$ satisfying the condition of Definition~\ref{DefinitionMCIntegral}.

For each $m,n\in \Nat$, set $\Omega_{m,n}:=\{t\in \Omega_m: n-1\leq \|f(t)\|<n\}$ and choose open $U_{m,n}\supset A_m$ such that
$$
    \mu(U_{m,n}\setminus A_m)\leq \frac{1}{2^{m+n}}\min\Bigl\{\frac{\epsilon}{n},\frac{\eta}{2}\Bigr\}.
$$
Clearly, $(\Omega_{m,n})$ is a partition of~$\Omega$. Define a gauge $\delta:\Omega \to \mathfrak{T}$ by $\delta(t):=U_{m,n}$ if $t\in \Omega_{m,n}$.
Let $(E_i)$ be a $\frac{\eta}{2}$-thick finite family of pairwise disjoint measurable sets and
let $t_i\in \Omega$ be points such that $E_i \subset \delta(t_i)$. We will check that
\begin{equation}\label{target}
    \Bigl\|\sum_i \mu(E_i)f(t_i)-x \Bigr\|\leq 2\epsilon.
\end{equation}

For each $i$, let
$m(i),n(i) \in \Nat$ be such that $t_i\in \Omega_{m(i),n(i)}$. The
set $F_i:=E_i \cap A_{m(i)}$ is measurable, $F_i \subset A_{m(i)}$ and $t_i \in \Omega_{m(i)}$. The $F_i$'s are pairwise disjoint.
Since
$$
    E_i\setminus F_i=E_i \setminus A_{m(i)} \subset \delta(t_i) \setminus A_{m(i)}=U_{m(i),n(i)}\setminus A_{m(i)}
$$
we have
\begin{multline*}
 \mu\Bigl(\Omega \setminus \bigcup_{i}F_i\Bigr)=
 \mu\Bigl(\Omega \setminus \bigcup_{i}E_i\Bigr)+\mu\Bigl(\bigcup_{i}E_i \setminus \bigcup_{i}F_i\Bigr) \leq \\
 \leq \frac{\eta}{2}+\mu \Bigl(\bigcup_{m,n=1}^\infty U_{m,n}\setminus A_m\Bigr) \leq \frac{\eta}{2}+
 \sum_{m,n=1}^\infty\frac{\eta}{2^{m+n+1}}=\eta,
\end{multline*}
and so the family $(F_i)$ is $\eta$-thick. From the MC-integrability condition it follows that
\begin{equation}\label{partialtarget}
    \Bigl\|\sum_i \mu(F_i)f(t_i)-x\Bigr\|\leq \epsilon.
\end{equation}
For each $\tilde{m},\tilde{n}\in \Nat$, let $I(\tilde{m},\tilde{n})$ be the (maybe empty) set of all indexes~$i$ for which 
$m(i)=\tilde{m}$ and $n(i)=\tilde{n}$. Observe that
\begin{multline*}
    \sum_{i\in I(\tilde{m},\tilde{n})} \mu(E_i \setminus F_i)\|f(t_i)\| \leq  \sum_{i\in I(\tilde{m},\tilde{n})} 
	\mu(E_i\setminus F_i) \tilde{n} = \\ =
    \mu\Bigl(\bigcup_{i\in I(\tilde{m},\tilde{n})} E_i\setminus F_i \Bigr)\tilde{n} \leq 
	\mu(U_{\tilde{m},\tilde{n}}\setminus A_{\tilde{m}})\tilde{n} \leq
    \frac{\epsilon}{2^{\tilde{m}+\tilde{n}}}.
\end{multline*}
Therefore
\begin{multline}\label{partialtarget2}
 \Bigl\|\sum_i \mu(E_i)f(t_i)-\sum_i\mu(F_i)f(t_i)\Bigr\| \leq
 \sum_i \mu(E_i \setminus F_i) \|f(t_i)\| = \\ =
    \sum_{\tilde{m},\tilde{n}=1}^\infty \, \sum_{i\in I(\tilde{m},\tilde{n})}\mu(E_i \setminus F_i) \|f(t_i)\| \leq
    \sum_{\tilde{m},\tilde{n}=1}^\infty\frac{\epsilon}{2^{\tilde{m}+\tilde{n}}}=\epsilon.
\end{multline}

Inequality \eqref{target} now follows from \eqref{partialtarget} and~\eqref{partialtarget2}. This shows that $f$ is McShane integrable, with McShane integral~$x$.
\end{proof}

The converse of Proposition~\ref{Mc2impliesMc} does not hold in general (see Theorem~\ref{PropositionMcShaneNoMcShane2} below), although it is true for certain quasi-Radon spaces like $[0,1]$, as we next prove.

\begin{prop}\label{McimpliesMc2}
Suppose $(\Omega,\mathfrak{T},\Sigma,\mu)$ is quasi-Radon and $\mathfrak{T}$ has a countable basis.
Then $f:\Omega\to X$ is McShane integrable (if and) only if it is MC-integrable.
\end{prop}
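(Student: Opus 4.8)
By Proposition~\ref{Mc2impliesMc} it remains to prove that McShane integrability implies MC-integrability when $\mathfrak{T}$ has a countable basis. The idea is to use a fixed countable basis $(V_k)$ for $\mathfrak{T}$ to "discretize" a given gauge. Fix $\epsilon>0$; let $x\in X$ be the McShane integral of $f$, and take $\eta>0$ and a gauge $\delta$ witnessing McShane integrability for this $\epsilon$. For each $t\in\Omega$ pick a basic open set $V_{k(t)}$ with $t\in V_{k(t)}\subset\delta(t)$. Now group the points according to the index of the chosen basic set: for $k\in\Nat$ set $\Omega_k':=\{t\in\Omega: k(t)=k\}$, and let $(\Omega_m)$ be the partition obtained by enumerating the nonempty $\Omega_k'$'s, with $A_m:=V_k$ the corresponding basic open set (so $\Omega_m\subset A_m$, and $A_m\subset\delta(t)$ for every $t\in\Omega_m$).

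With these data I claim the MC-integrability condition holds with the same $\eta$. Indeed, suppose $(E_i)$ is an $\eta$-thick finite family of pairwise disjoint measurable sets with $E_i\subset A_{m(i)}$ and $t_i\in\Omega_{m(i)}$ for each $i$. Then $E_i\subset A_{m(i)}\subset\delta(t_i)$, so $(E_i)$ together with the tags $(t_i)$ is exactly an $\eta$-thick McShane partition subordinate to $\delta$; the McShane condition yields
$$
    \Bigl\|\sum_i \mu(E_i)f(t_i)-x\Bigr\|\leq\epsilon,
$$
which is precisely what is required. Hence $f$ is MC-integrable with integral $x$.

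The only point that needs care is the passage from the gauge $\delta$ to the countable partition: one must check that for each $t$ a basic set $V_{k(t)}$ with $t\in V_{k(t)}\subset\delta(t)$ really exists, which is immediate since $(V_k)$ is a basis and $\delta(t)$ is an open neighbourhood of $t$; and one must make sure that defining $A_m$ to be the chosen basic set (rather than $\delta(t)$ for a particular $t$) does not cause trouble — it does not, because what the McShane definition uses about $\delta(t_i)$ is only that $E_i\subset\delta(t_i)$, and $E_i\subset A_{m(i)}\subset\delta(t_i)$ delivers this. I expect the main (very mild) obstacle to be purely bookkeeping: enumerating the nonempty classes $\Omega_k'$ as a genuine partition $(\Omega_m)$ indexed by $\Nat$ and keeping the correspondence $m\leftrightarrow k$, $A_m=V_k$ straight; there is no real analytic difficulty, the countable-basis hypothesis doing all the work by reducing the uncountable family of gauge values to countably many open sets.
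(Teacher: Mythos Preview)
Your proposal is correct and follows essentially the same argument as the paper: the paper simply replaces the gauge $\delta$ by one taking values in the countable basis (your choice of $V_{k(t)}$ with $t\in V_{k(t)}\subset\delta(t)$ is exactly this step made explicit), and then partitions $\Omega$ according to the basic set assigned to each point, setting $A_m$ equal to that basic set. The only cosmetic difference is that the paper allows empty pieces $\Omega_m$ rather than enumerating the nonempty ones.
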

\begin{proof} It only remains to prove the ``only if''. Assume that $f$ is McShane integrable, with McShane integral $x\in X$.
Let $\{U_m : m\in\mathbb{N}\}$ be a countable basis for~$\mathfrak{T}$
consisting of mutually distinct elements. 
Fix $\varepsilon>0$. Find $\eta>0$ and a gauge $\delta$ on~$\Omega$ fulfilling the condition of Definition~\ref{DefinitionMcShane}. We can suppose without loss of generality that $\delta(t)\in\{U_m : m\in\mathbb{N}\}$ for every $t\in \Omega$. Set
$$
    \Omega_m := \{t\in\Omega : \delta(t)=U_m\} \
    \mbox{ and }
    \ A_m:=U_m \quad \mbox{for all }\, m\in \Nat.
$$
Clearly, $(\Omega_m)$ is a partition of~$\Omega$ and $\Omega_m\subset A_m \in \Sigma$. Now let $(E_i)$ be an $\eta$-thick finite family of pairwise disjoint measurable sets with $E_i\subset A_{m(i)}$ and let $t_i\in \Omega_{m(i)}$. Then $\delta(t_i)=U_{m(i)}=A_{m(i)}$, hence $E_i \subset \delta(t_i)$ for all~$i$. From the McShane integrability condition it follows that
$\|\sum_i \mu(E_i)f(t_i)-x\| \leq \epsilon$. This shows that $f$ is MC-integrable.
\end{proof}

Fremlin raised in \cite[4G(a)]{fre5} the following question: \emph{Does any topology on~$\Omega$ for which $\mu$ is quasi-Radon yield the same collection of McShane integrable $X$-valued functions?} In view of Propositions \ref{Mc2impliesMc} and~\ref{McimpliesMc2}, we get a partial answer:

\begin{cor}\label{CorollaryFremlinProblemMcShane}
Let $\mathfrak{T}_1$ and $\mathfrak{T}_2$ be two topologies on~$\Omega$ for which $\mu$ is quasi-Radon. Suppose $\mathfrak{T}_1$ has a countable basis. If $f:\Omega \to X$ is McShane integrable with respect to~$\mathfrak{T}_1$, then it is also McShane integrable with respect to~$\mathfrak{T}_2$ (and the corresponding integrals coincide).
\end{cor}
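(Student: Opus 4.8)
The plan is to use the MC-integral as a bridge between the two notions of McShane integrability, exploiting the fact that MC-integrability is a property of the measure space $(\Omega,\Sigma,\mu)$ alone and makes no reference to any topology. First I would apply Proposition~\ref{McimpliesMc2} to the quasi-Radon probability space $(\Omega,\mathfrak{T}_1,\Sigma,\mu)$: since $\mathfrak{T}_1$ admits a countable basis, McShane integrability of $f$ with respect to $\mathfrak{T}_1$ is equivalent to MC-integrability of $f$; in particular $f$ is MC-integrable, and inspecting the statements of Propositions~\ref{Mc2impliesMc} and~\ref{McimpliesMc2} (which both assert that the integrals coincide) the MC-integral of $f$ equals its $\mathfrak{T}_1$-McShane integral, call it $x$. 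Next I would apply Proposition~\ref{Mc2impliesMc} to the quasi-Radon probability space $(\Omega,\mathfrak{T}_2,\Sigma,\mu)$: MC-integrability of $f$ implies McShane integrability of $f$ with respect to $\mathfrak{T}_2$, and the $\mathfrak{T}_2$-McShane integral again equals the MC-integral of $f$, namely $x$. Chaining these two implications gives that $f$ is McShane integrable with respect to $\mathfrak{T}_2$ with integral $x$, the same value as the $\mathfrak{T}_1$-McShane integral, which is exactly the assertion.

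Since both propositions are already established and each of them records the equality of the relevant integral with the MC-integral, there is essentially no remaining obstacle here; the corollary is a formal consequence. The only point that requires a moment's care is that the value $x$ transported from $\mathfrak{T}_1$ to $\mathfrak{T}_2$ is unambiguous, i.e.\ that the MC-integral is uniquely determined by $f$ — but this was observed immediately after Definition~\ref{DefinitionMCIntegral}, so nothing further needs to be checked. Thus the proof is just the two-line composition of Propositions~\ref{McimpliesMc2} and~\ref{Mc2impliesMc} through MC-integrability.
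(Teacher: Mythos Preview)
Your proposal is correct and is exactly the approach the paper intends: the corollary is stated immediately after Propositions~\ref{Mc2impliesMc} and~\ref{McimpliesMc2} with the phrase ``In view of Propositions~\ref{Mc2impliesMc} and~\ref{McimpliesMc2}'' and no further proof, so the two-step bridge through MC-integrability that you spell out is precisely what is meant. The only minor inaccuracy is that the \emph{statement} of Proposition~\ref{McimpliesMc2} does not explicitly record coincidence of the integrals --- but this follows either from its proof or, as you could alternatively argue, by applying Proposition~\ref{Mc2impliesMc} back to~$\mathfrak{T}_1$ and invoking uniqueness of the McShane integral.
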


\section{MC-filling families versus the McShane integral}\label{SectionMeasureFilling}

The connection between MC-filling families (Definition~\ref{DefinitionMCfilling}) and the MC-integral is
explained in Proposition~\ref{LemTheScalNullBIS} below. First, it is convenient to characterize MC-filling families as follows:

\begin{lem}\label{McShanefillinglemma}
An hereditary family $\mathcal{F} \subset [\Omega]^{<\omega}$ is MC-filling on~$\Omega$ if (and only if) there exists $\eps>0$ such that for every countable partition $(\Omega_m)$ of $\Omega$ and sets $A_m\in\Sigma$ with $\Omega_m\subset A_m$, there is $F\in\mathcal{F}$ such that
$$
    \mu\left( \bigcup\{A_m : \, F\cap \Omega_m\neq\emptyset \}\right) > \varepsilon.
$$
\end{lem}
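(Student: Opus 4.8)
The plan is to prove both implications. The ``only if'' direction (the displayed inner-regularity-type condition implies MC-filling) is immediate: given a countable partition $(\Omega_m)$ of $\Omega$, simply take $A_m = \Omega_m$; then $A_m \in \Sigma$ is not automatic, so instead observe that $\bigcup\{\Omega_m : F\cap\Omega_m\neq\emptyset\}$ is covered by any measurable hull, and $\mu^\ast$ of a set equals $\mu$ of any measurable envelope. More carefully: if the displayed condition holds for some $\eps>0$, then for a countable partition $(\Omega_m)$ choose $A_m\in\Sigma$ with $\Omega_m\subset A_m$ and $\mu^\ast(\Omega_m)=\mu(A_m)$ (a measurable envelope); but this does not directly give control of the union. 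The cleaner route is: pick $F\in\mathcal{F}$ from the displayed condition applied to these $A_m$; then $\bigcup\{A_m: F\cap\Omega_m\neq\emptyset\}\supset\bigcup\{\Omega_m:F\cap\Omega_m\neq\emptyset\}$, hence $\mu^\ast(\bigcup\{\Omega_m:F\cap\Omega_m\neq\emptyset\})\leq\mu(\bigcup\{A_m:F\cap\Omega_m\neq\emptyset\})$ — wait, that is the wrong inequality direction. So instead one uses envelopes chosen so that $\mu^\ast$ of the relevant union is close to $\mu$ of the union of envelopes; I will address this in the main obstacle below.

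For the ``if'' direction, assume $\mathcal{F}$ is MC-filling with witnessing constant $\eps>0$; I will show the displayed condition holds with $\eps/2$ (say). Let $(\Omega_m)_{m\in\mathbb N}$ be a countable partition of $\Omega$ and let $A_m\in\Sigma$ with $\Omega_m\subset A_m$. The idea is to refine $(\Omega_m)$ to a new countable partition so that the MC-filling condition, applied to the refinement, produces an $F$ whose ``hit'' union has large outer measure, and then relate that to $\mu$ of the union of the corresponding $A_m$'s. Concretely, for each $m$ fix a measurable envelope $B_m\in\Sigma$ of $\Omega_m$ with $\Omega_m\subset B_m\subset A_m$ and $\mu(B_m)=\mu^\ast(\Omega_m)$; replacing $A_m$ by $B_m$ only shrinks the union, so it suffices to work with these. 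Now apply the definition of MC-filling to the partition $(\Omega_m)$ itself: there is $F\in\mathcal{F}$ with $\mu^\ast(\bigcup\{\Omega_m:F\cap\Omega_m\neq\emptyset\})>\eps$. Since $F$ is finite, only finitely many indices $m_1,\dots,m_k$ satisfy $F\cap\Omega_{m_j}\neq\emptyset$, so the relevant union is $\Omega_{m_1}\cup\cdots\cup\Omega_{m_k}$, a finite union whose outer measure is at most $\mu(B_{m_1}\cup\cdots\cup B_{m_k})=\mu(\bigcup\{B_m:F\cap\Omega_m\neq\emptyset\})\leq\mu(\bigcup\{A_m:F\cap\Omega_m\neq\emptyset\})$. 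Hence the displayed inequality holds with the \emph{same} $\eps$, and in fact this direction is also essentially trivial once one uses that $F$ is finite and that outer measure of a finite union is dominated by $\mu$ of a measurable superset.

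The point I expect to be the only subtlety is the interplay between $\mu^\ast$ and $\mu$ through envelopes: one must be careful that for a \emph{finite} union $\Omega_{m_1}\cup\cdots\cup\Omega_{m_k}$ one has $\mu^\ast(\Omega_{m_1}\cup\cdots\cup\Omega_{m_k})\leq\mu(A_{m_1}\cup\cdots\cup A_{m_k})$, which is just monotonicity of $\mu^\ast$ together with $\mu^\ast=\mu$ on $\Sigma$ — no countable additivity of $\mu^\ast$ is needed precisely because $F$ being finite keeps everything finite. In the ``only if'' direction the same remark shows that choosing $A_m$ to be \emph{any} measurable sets with $\Omega_m\subset A_m$ (for instance $A_m=\Omega$ would be useless, so one takes envelopes) lets the displayed hypothesis feed back into Definition~\ref{DefinitionMCfilling}: given the partition $(\Omega_m)$, apply the displayed condition to envelopes $A_m$ of $\Omega_m$ with $\mu(A_m)$ as small as possible on each piece; finiteness of the resulting $F$ again collapses the union to a finite one, on which $\mu$ of the envelopes and $\mu^\ast$ of the pieces agree up to the slack in the envelopes, which can be absorbed. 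Thus the equivalence holds with the constant $\eps$ preserved, and the proof is a short bookkeeping argument around the identity ``$\mu^\ast$ of a finite union $=\mu$ of the union of envelopes.''
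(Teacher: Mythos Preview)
Your labels are swapped: in the statement ``$\mathcal F$ is MC-filling if (and only if) the displayed condition holds'', the \emph{if} direction is ``displayed $\Rightarrow$ MC-filling'' (the nontrivial one), and the \emph{only if} is ``MC-filling $\Rightarrow$ displayed'' (trivial by monotonicity of $\mu^\ast$, exactly as in your second paragraph). With that correction, both directions in your plan are fine.

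For the nontrivial direction your route differs from the paper's. You choose, for each $m$, a measurable envelope $A_m$ of $\Omega_m$, apply the hypothesis, and then use that for the finite index set $I=\{m:F\cap\Omega_m\neq\emptyset\}$ one has
\[
\mu\Bigl(\bigcup_{m\in I}A_m\Bigr)=\mu^\ast\Bigl(\bigcup_{m\in I}\Omega_m\Bigr),
\]
i.e., a finite union of measurable envelopes is again a measurable envelope. This is true (if $B\subset\bigcup_{m\in I}A_m$ is measurable and disjoint from $\bigcup_{m\in I}\Omega_m$, then each $B\cap A_m$ is a measurable subset of $A_m\setminus\Omega_m$, hence null), and it gives the conclusion with the \emph{same} $\varepsilon$, with no slack to ``absorb''; you should state and justify this identity cleanly rather than hedging. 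The paper instead fixes $\eta<\varepsilon$, chooses for every finite $I\subset\mathbb N$ a measurable $B_I\supset\bigcup_{m\in I}\Omega_m$ with $\mu(B_I)<\mu^\ast(\bigcup_{m\in I}\Omega_m)+(\varepsilon-\eta)$, sets $A_m:=\bigcap\{B_I:m\in I\}$, and then uses $\bigcup_{m\in I}A_m\subset B_I$ to get $\mu^\ast(\bigcup_{m\in I}\Omega_m)>\eta$. Your argument is shorter and loses nothing in the constant; the paper's avoids invoking the envelope-of-a-union fact at the cost of an $\varepsilon$--$\eta$ approximation. Either is acceptable.
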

\begin{proof} The ``only if'' is obvious. For the converse, we will prove that the condition of Definition~\ref{DefinitionMCfilling}
holds for $0<\eta<\varepsilon$. Suppose we are given a countable partition $(\Omega_m)$ of $\Omega$. For every finite set $I\subset \Nat$, we choose $B_I\in \Sigma$ such that $B_I\supset \bigcup_{m\in I}\Omega_m$ and $\mu(B_I) -\mu^\ast(\bigcup_{m\in I}\Omega_m)<\varepsilon-\eta$. For each $m\in \Nat$, we define $A_m := \bigcap\{ B_I : m\in I\}$. We have $\Omega_m\subset A_m\in\Sigma$, so we can apply the hypothesis to find $F\in\mathcal{F}$ such that
$$
    \mu\left( \bigcup\{A_m : \, F\cap \Omega_m\neq\emptyset \}\right) > \varepsilon.
$$
Consider the (finite) set $I:=\{m \in \Nat: F \cap \Omega_m\neq \emptyset\}$. Since $\bigcup_{m\in I}A_m \subset B_I$, we have
$$
    \mu^\ast\Bigl( \bigcup_{m\in I}\Omega_m\Bigr) >
    \mu(B_I) - (\epsilon-\eta)
    \geq \mu\Bigl( \bigcup_{m\in I}A_m \Bigr) - (\eps-\eta) >\eta.
$$
This proves that $\mathcal{F}$ is MC-filling.
\end{proof}

A set $\Lambda\subset B_{X^*}$ is called \emph{norming} if
$\|x\|=\sup\{|x^*(x)|:x^*\in \Lambda\}$ for all $x\in X$. 

\begin{prop}\label{LemTheScalNullBIS}
Let $f:\Omega \to X$ be a function for which there exist a norming set $\Lambda\subset B_{X^*}$ and a
family $(C_{x^*})_{x^*\in \Lambda}$ of subsets of~$\Omega$ such that $x^*f=1_{C_{x^*}}$ and $\mu^*(C_{x^*})=0$
for every $x^*\in \Lambda$. The following statements are equivalent:
\begin{enumerate}
\item[(i)] $f$ is not MC-integrable;
\item[(ii)] $\bigcup_{x^*\in \Lambda}[C_{x^*}]^{<\omega}$ is MC-filling on~$\Omega$.
\end{enumerate}
\end{prop}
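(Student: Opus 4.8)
The plan is to prove the two implications separately, using the characterization of MC-filling families from Lemma~\ref{McShanefillinglemma} and the reformulated MC-integrability condition in terms of partitions $(\Omega_m)$ together with majorants $A_m \supset \Omega_m$.

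First I would show $\neg(ii) \Rightarrow \neg(i)$, i.e.\ that if $\cF := \bigcup_{x^*\in\Lambda}[C_{x^*}]^{<\omega}$ is \emph{not} MC-filling, then $f$ is MC-integrable with integral $x=0$. Fix $\epsilon>0$; apply the negation of the Lemma~\ref{McShanefillinglemma} condition with $\eta:=\epsilon$ in place of $\eps$: there is a countable partition $(\Omega_m)$ of $\Omega$ and sets $A_m\in\Sigma$, $\Omega_m\subset A_m$, such that for every $F\in\cF$ one has $\mu(\bigcup\{A_m : F\cap\Omega_m\neq\emptyset\})\le\epsilon$. I may also shrink each $A_m$ so that $\mu(A_m\sm\Omega_m)$ is summable, or better, arrange that $\mu(\bigcup_m A_m\sm\bigcup_m\Omega_m)$ is controlled, but in fact the key point is different and does not need that. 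Now take any $\eta$-thick (with a suitable $\eta$, to be chosen $\le\epsilon$) finite disjoint family $(E_i)$ with $E_i\subset A_{m(i)}$ and $t_i\in\Omega_{m(i)}$. The crucial observation is that $\|\sum_i\mu(E_i)f(t_i)\| = \sup_{x^*\in\Lambda}|\sum_i\mu(E_i)\,x^*f(t_i)| = \sup_{x^*\in\Lambda}\sum_{i:\,t_i\in C_{x^*}}\mu(E_i)$, since $x^*f=1_{C_{x^*}}$ is $\{0,1\}$-valued and the $E_i$ are disjoint. For a fixed $x^*\in\Lambda$, the set $\{t_i : t_i\in C_{x^*}\}$ is a finite subset of $C_{x^*}$, hence lies in $[C_{x^*}]^{<\omega}\subset\cF$; call it $F$. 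Then $F\cap\Omega_m\neq\emptyset$ exactly for those $m$ hit by some such $t_i$, and $\bigcup_{i:\,t_i\in C_{x^*}}E_i \subset \bigcup\{A_m : F\cap\Omega_m\neq\emptyset\}$ because $E_i\subset A_{m(i)}$. Therefore $\sum_{i:\,t_i\in C_{x^*}}\mu(E_i) = \mu\bigl(\bigcup_{i:\,t_i\in C_{x^*}}E_i\bigr) \le \mu\bigl(\bigcup\{A_m : F\cap\Omega_m\neq\emptyset\}\bigr)\le\epsilon$. Taking the supremum over $x^*$ gives $\|\sum_i\mu(E_i)f(t_i)\|\le\epsilon$, so $f$ is MC-integrable with integral $0$.

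Next I would prove $\neg(i)\Rightarrow\neg(ii)$: if $f$ is MC-integrable then $\cF$ is not MC-filling. I would first check that the MC-integral must be $0$ — this follows since $f$ is scalarly null ($x^*f=1_{C_{x^*}}$ with $\mu^*(C_{x^*})=0$ means $x^*f=0$ $\mu$-a.e.), so its Pettis integral, hence its MC-integral, is $0$; alternatively argue directly that no nonzero $x$ can satisfy the estimate by testing against small families. Now fix any $\eps>0$; I must produce a countable partition $(\Omega_m)$ and sets $A_m\supset\Omega_m$ witnessing the failure of the Lemma~\ref{McShanefillinglemma} condition for that $\eps$. Apply MC-integrability of $f$ with $\epsilon:=\eps/2$ (say): get $\eta>0$, a partition $(\Omega_m)$, and $A_m\supset\Omega_m$ so that every $\eta$-thick disjoint family $(E_i)$ with $E_i\subset A_{m(i)}$, $t_i\in\Omega_{m(i)}$ satisfies $\|\sum_i\mu(E_i)f(t_i)\|\le\eps/2$. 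Given $F\in\cF$, say $F\subset C_{x^*}$ for some $x^*\in\Lambda$, write $I:=\{m : F\cap\Omega_m\neq\emptyset\}$ and pick for each $m\in I$ a point $t_m\in F\cap\Omega_m\subset C_{x^*}$. I want to build a disjoint family $(E_m)_{m\in I}$ with $E_m\subset A_m$, $t_m\in\Omega_m$, such that $\bigcup_{m\in I}E_m$ recovers essentially all of $\bigcup_{m\in I}A_m$ and the whole family is $\eta$-thick. The natural choice is $E_m := A_m\sm\bigcup_{m'\in I,\,m'<m}A_{m'}$, which are disjoint, satisfy $E_m\subset A_m$, and $\bigcup_{m\in I}E_m = \bigcup_{m\in I}A_m$; but this family is finite and covers only $\bigcup_{m\in I}A_m$, which need not be $\eta$-thick. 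To fix thickness I would enlarge the family by adding one extra set $E_0 := \Omega\sm\bigcup_{m\in I}A_m$ (or rather, since we need $E_0\subset A_{m(0)}$ for some index with a chosen point, absorb the complement into a block of the partition using that $(\Omega_m)$ covers $\Omega$): more carefully, since the $\Omega_m\subset A_m$ cover $\Omega$, the family $(A_m)_{m\in\Nat}$ already covers $\Omega$, so a disjointification of \emph{all} the $A_m$ — keeping $E_m\subset A_m$ and assigning $t_m\in\Omega_m$ arbitrarily for $m\notin I$ — gives an $\eta$-thick (indeed $0$-thick) disjoint family; restricting attention to the summand over $m\in I$ and using that $x^*f(t_m)=1$ for $m\in I$ while the other summands are handled because $t_m$ may be chosen outside $C_{x^*}$...

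Here is the main obstacle, and how I would resolve it: I cannot control $x^*f(t_m)$ for the indices $m\notin I$, so the full sum $\sum_m\mu(E_m)x^*f(t_m)$ is not simply $\mu(\bigcup_{m\in I}A_m)$. The clean way around this is to note that MC-integrability lets us test against \emph{any} $\eta$-thick disjoint family, and a disjoint family need not be a partition: take just $(E_m)_{m\in I}$ where $E_m := A_m\sm\bigcup_{m'\in I,\,m'<m}A_{m'}$ together with the single extra set $E^\sharp := \Omega\sm\bigcup_{m\in I}A_m$ assigned a tag $t^\sharp\in\Omega_{m^\sharp}$ for whatever $m^\sharp$ has $\Omega_{m^\sharp}\not\subset\bigcup_{m\in I}A_m$ — choosing that tag so that $x^*f(t^\sharp)=0$, which is possible because $\mu^*(C_{x^*})=0$ forces $\Omega\sm C_{x^*}$ to meet every set of positive outer measure, in particular meets $\Omega_{m^\sharp}$ whenever $\mu^*(\Omega_{m^\sharp}\sm\bigcup_{m\in I}A_m)>0$. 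With that choice the family is $0$-thick hence $\eta$-thick, $E^\sharp$ contributes $0$, and $\sum\mu(E)f(t) $ evaluated at $x^*$ equals $\sum_{m\in I}\mu(E_m) = \mu\bigl(\bigcup_{m\in I}A_m\bigr) \ge \mu\bigl(\bigcup\{A_m : F\cap\Omega_m\neq\emptyset\}\bigr)$. Then MC-integrability gives $\mu\bigl(\bigcup\{A_m : F\cap\Omega_m\neq\emptyset\}\bigr) \le \|\sum\mu(E)f(t)\| \le \eps/2 < \eps$. Since $F\in\cF$ was arbitrary, the Lemma~\ref{McShanefillinglemma} condition fails for $\eps$, and as $\eps$ was arbitrary, $\cF$ is not MC-filling. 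I expect the bookkeeping around degenerate cases (empty $E^\sharp$, the possibility that $\bigcup_{m\in I}A_m$ already has full measure, or that $\Omega\sm\bigcup_{m\in I}A_m$ has positive measure but sits inside $\bigcup\{\Omega_m : m\notin I\}$ — which it does, since the $\Omega_m$ partition $\Omega$) to be the only fiddly part; the scalar computation $\|\sum_i\mu(E_i)f(t_i)\| = \sup_{x^*\in\Lambda}\sum_{i:\,t_i\in C_{x^*}}\mu(E_i)$ via the norming set is the conceptual heart and is immediate.
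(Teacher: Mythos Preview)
Your first direction, $\neg(\mathrm{ii})\Rightarrow\neg(\mathrm{i})$, is correct and is essentially the paper's argument in contrapositive form: the key identity $\bigl\|\sum_i\mu(E_i)f(t_i)\bigr\|=\sup_{x^*\in\Lambda}\mu\bigl(\bigcup\{E_i:t_i\in C_{x^*}\}\bigr)$ and the inclusion $\bigcup\{E_i:t_i\in C_{x^*}\}\subset\bigcup\{A_m:F\cap\Omega_m\neq\emptyset\}$ for $F=\{t_i:t_i\in C_{x^*}\}$ are exactly what the paper uses.

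Your second direction has a genuine gap. The set $E^\sharp:=\Omega\setminus\bigcup_{m\in I}A_m$ is generally \emph{not} contained in any single $A_{m^\sharp}$, so the family $(E_m)_{m\in I}\cup\{E^\sharp\}$ is not admissible in Definition~\ref{DefinitionMCIntegral} and the MC-integrability bound cannot be applied to it. The fix, however, is simpler than the workaround you attempt: your worry about controlling $x^*f(t_m)$ for $m\notin I$ is misplaced. Since $x^*f=1_{C_{x^*}}\ge 0$, any extra terms can only \emph{increase} $x^*\bigl(\sum_m\mu(E_m)f(t_m)\bigr)$, and an increase is harmless for the inequality you need. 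Concretely (this is what the paper does): choose a finite $J\subset\Nat$ disjoint from $I$ so that $(A_m)_{m\in I\cup J}$ is $\eta$-thick (possible since $\bigcup_m A_m\supset\bigcup_m\Omega_m=\Omega$), disjointify enumerating $I$ \emph{before} $J$ so that $\bigcup_{m\in I}E_m=\bigcup_{m\in I}A_m$, and pick tags $t_m\in\Omega_m$ arbitrarily for $m\in J$. Then
\[
\mu\Bigl(\bigcup_{m\in I}A_m\Bigr)=\sum_{m\in I}\mu(E_m)\le\sum_{m:\,t_m\in C_{x^*}}\mu(E_m)\le\Bigl\|\sum_m\mu(E_m)f(t_m)\Bigr\|\le\frac{\eps}{2},
\]
which is exactly what you wanted. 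No need to select $t^\sharp\notin C_{x^*}$ at all.
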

\begin{proof} Observe first that for every finite family $(E_i)$ of pairwise disjoint elements of~$\Sigma$ and
every choice of points $t_i\in \Omega$, we have
\begin{multline}\label{EqnRiemannSumSupremum}
    \Bigl\|\sum_i\mu(E_{i})f(t_i)\Bigr\|
    = \sup_{x^*\in \Lambda} \, \Bigl|x^*\Bigl(\sum_i \mu(E_{i})f(t_i)\Bigr)\Bigr|
    = \\ =\sup_{x^*\in \Lambda} \, \sum_i \mu(E_{i})1_{C_{x^*}}(t_i)
    = \sup_{x^*\in \Lambda} \, \mu\left(\bigcup\{E_{i} : \, t_i\in C_{x^*}\}\right).
\end{multline}
Since $\Lambda$ separates the points of~$X$ and $x^*f$ vanishes $\mu$-a.e. for each $x^*\in \Lambda$, 
the MC-integral of~$f$ is~$0 \in X$ whenever $f$ is MC-integrable. Bearing in mind~\eqref{EqnRiemannSumSupremum}, statement~(i) is equivalent to:
\begin{enumerate}
\item[(iii)] \emph{There exists $\eps>0$ such that for every $\eta>0$, every countable partition $(\Omega_m)$ of~$\Omega$ and 
every sets $A_m\in\Sigma$ with $\Omega_m\subset A_m$, there exist an $\eta$-thick finite family
$(E_i)$ of pairwise disjoint elements of~$\Sigma$ with $E_i\subset A_{m(i)}$, points $t_i\in\Omega_{m(i)}$ and a functional $x^*\in \Lambda$ such that
$\mu\left(\bigcup\{E_{i} : t_i\in C_{x^*}\}\right)>\epsilon$.}
\end{enumerate}

Let us turn to the proof of (iii)$\Leftrightarrow$(ii). Assume first that (iii) holds and take a countable partition $(\Omega_m)$ of~$\Omega$ and sets $\Omega_m \subset A_m \in \Sigma$. Choose $\eta>0$ arbitrary and let $(E_i)$, $(t_i)$ and $x^*$ be as in~(iii).
Observe that the set $F$ made up of all $t_i$'s belonging to~$C_{x^*}$ satisfies
$$
    \bigcup \{A_m: \, F\cap \Omega_m\neq \emptyset\}\supset \bigcup \{E_i: \, t_i\in C_{x^*}\}
$$
and so $\mu(\bigcup\{A_m: F\cap \Omega_m\neq\emptyset\}) > \varepsilon$. According to Lemma~\ref{McShanefillinglemma}, this proves that the family $\bigcup_{x^*\in \Lambda}[C_{x^*}]^{<\omega}$ is MC-filling on~$\Omega$.

Conversely, assume that (ii) holds. Let~$\epsilon>0$ be as in Lemma~\ref{McShanefillinglemma} applied to the family $\bigcup_{x^*\in \Lambda}[C_{x^*}]^{<\omega}$.
Fix $\eta>0$, a countable partition $(\Omega_m)$ of~$\Omega$ and sets $A_m\in\Sigma$ with $\Omega_m\subset A_m$. There exist $x^*\in \Lambda$ and 
finite $F\subset C_{x^*}$ such that
$\mu(\bigcup_{m\in I}A_m)>\epsilon$,
where $I:=\{m\in \Nat: F\cap \Omega_m\neq\emptyset\}$.
Now take a finite set $J\subset \Nat$ disjoint from~$I$ such that $(A_m)_{m\in I\cup J}$ is $\eta$-thick.
Enumerate $I=\{m(1),\dots,m(n)\}$ and $J=\{m(n+1),\dots,m(k)\}$.
Set $E_1:=A_{m(1)}$ and $E_i:=A_{m(i)} \setminus \bigcup_{j=1}^{i-1}A_{m(j)}$ for $i=2,\dots,k$.
Then $(E_i)$ is an $\eta$-thick finite family of pairwise disjoint elements of~$\Sigma$
with $E_i \subset A_{m(i)}$ and $\bigcup_{i=1}^n E_i=\bigcup_{m\in I}A_m$.
Choose $t_i\in F \cap \Omega_{m(i)}$ for $i=1,\dots,n$
and choose $t_i\in \Omega_{m(i)}$ arbitrary for $i=n+1,\dots,k$. Then
$$
    \mu\Bigl(\bigcup\{E_{i} : \, t_i\in C_{x^*}\}\Bigr)
    \geq  \mu\Bigl(\bigcup_{i=1}^n E_{i}\Bigr)=
    \mu\Bigl(\bigcup_{m\in I}A_m\Bigr) >\epsilon.
$$
This shows that (iii) holds, that is, $f$ is not MC-integrable.
\end{proof}

Given a compact Hausdorff topological space~$K$, we write $C(K)$ to denote
the Banach space of all real-valued continuous functions on~$K$ with the sup norm.

\begin{prop}\label{thescalnullfunction}
Let $\mathcal{F} \subset [\Omega]^{<\omega}$ be a compact hereditary family made up of sets having outer measure~$0$.
Let $f:\Omega\to C(\mathcal{F})$ be defined by $f(t)(F) := 1_F(t)$, $t\in \Omega$,
$F\in \cF$. Then:
\begin{enumerate}
\item[(i)] $f$ is scalarly null;
\item[(ii)] $f$ is not MC-integrable if and only if $\mathcal{F}$ is MC-filling on~$\Omega$.
\end{enumerate}
\end{prop}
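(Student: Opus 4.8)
The plan is to prove (i) and (ii) separately, using Proposition~\ref{LemTheScalNullBIS} as the main engine for part~(ii).

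\textbf{Part (i): $f$ is scalarly null.} First I would identify a convenient norming set for $C(\cF)$. Since $\cF$ is compact and hereditary (so $\emptyset \in \cF$), the evaluations $\delta_F \in C(\cF)^*$ at points $F \in \cF$ form a norming set, as do the functionals $\pm\delta_F$. Now for a fixed $F \in \cF$, the composition $\delta_F \circ f : \Omega \to \erre$ sends $t \mapsto f(t)(F) = 1_F(t)$, i.e. $\delta_F f = 1_F$. Since $F$ is finite and $\mu$ (being a probability measure, hence with no atoms forced, but more simply) every finite set has outer measure $0$ by hypothesis, $1_F$ vanishes $\mu$-a.e. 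A general $x^* \in C(\cF)^*$ is a Radon measure $\nu$ on $\cF$; I would argue that $x^* f = \int_\cF 1_F \, d\nu(F)$ as a function of $t$, and since for each fixed $t$ the set $\{F \in \cF : t \in F\}$ matters, one shows $x^*f(t) = \nu(\{F : t \in F\})$. The cleanest route: it suffices to check scalar nullity on the norming set $\{\pm \delta_F : F \in \cF\}$ because a scalarly null function is characterized by $x^*f = 0$ a.e. for all $x^*$ in a norming (even just separating + suitably bounded) set — but to be safe I would instead note that $C(\cF)$ embeds isometrically into $\ell^\infty(\cF)$ via $g \mapsto (g(F))_F$, and $f$ composed with this embedding has coordinates $1_F$, each null a.e.; then for arbitrary $x^* \in C(\cF)^*$ extended by Hahn--Banach to $\ell^\infty(\cF)^*$, one still needs a dominated-convergence-type argument. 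The genuinely clean statement is: since each $\delta_F f$ is null a.e. and $\{\delta_F\}$ is norming, for any $x^*$ the function $x^*f$ is a scalarly-null-type limit — I would in fact appeal directly to the fact that $f$ maps into the pointwise-bounded family and that a function into $C(K)$ is scalarly null iff $\delta_k f = 0$ a.e. for all $k \in K$ (this is standard: a Radon measure is the integral of point masses, and one applies Fubini/Tonelli to $(t,F) \mapsto 1_F(t)$, which is jointly measurable, with $\int 1_F(t)\,d\mu(t) = 0$ for all $F$).

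\textbf{Part (ii): $f$ is not MC-integrable iff $\cF$ is MC-filling.} I would apply Proposition~\ref{LemTheScalNullBIS} with $\Lambda := \{\delta_F : F \in \cF\}$, which is a norming subset of $B_{C(\cF)^*}$, and with $C_{\delta_F} := F$ for each $F \in \cF$. The hypotheses of that proposition are met: $\delta_F \circ f = 1_F = 1_{C_{\delta_F}}$, and $\mu^*(C_{\delta_F}) = \mu^*(F) = 0$ since $F$ is finite (by hypothesis all members of $\cF$ have outer measure $0$; a finite set has outer measure $0$ in a probability space only if it has measure $0$, which is exactly the standing assumption). The proposition then gives: $f$ is not MC-integrable iff $\bigcup_{F \in \cF} [C_{\delta_F}]^{<\omega} = \bigcup_{F \in \cF} [F]^{<\omega}$ is MC-filling on $\Omega$. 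Finally, since $\cF$ is hereditary, $\bigcup_{F \in \cF}[F]^{<\omega} = \cF$ (every subset of a member of $\cF$ is again in $\cF$, and conversely each $F \in \cF$ is a subset of itself). Hence $f$ is not MC-integrable iff $\cF$ is MC-filling.

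\textbf{Expected main obstacle.} The only subtle point is the reduction in part~(i) from the norming evaluations $\delta_F$ to \emph{all} $x^* \in C(\cF)^*$; establishing joint measurability of $(t,F) \mapsto 1_F(t)$ on $\Omega \times \cF$ (where $\cF \subset 2^\Omega$ carries the product topology, so that $\{(t,F) : t \in F\}$ is relatively clopen in each coordinate slice and Borel overall) and then invoking Fubini against a Radon measure $\nu$ on $\cF$ requires a little care, but it is routine. Everything in part~(ii) is a direct, essentially bookkeeping, application of Proposition~\ref{LemTheScalNullBIS} together with heredity of $\cF$, so no real difficulty arises there.
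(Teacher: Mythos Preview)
Your treatment of part~(ii) is correct and coincides with the paper's: apply Proposition~\ref{LemTheScalNullBIS} to the norming set $\Lambda=\{\delta_F:F\in\cF\}$ with $C_{\delta_F}=F$, and use heredity to identify $\bigcup_{F\in\cF}[F]^{<\omega}=\cF$.

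Part~(i) has a genuine gap. You correctly compute $\delta_F f=1_F$ and note that $\mu^*(F)=0$, but the passage from the evaluations $\delta_F$ to an arbitrary $x^*\in C(\cF)^*$ via Fubini is not justified. No topology is assumed on $\Omega$, so saying that $\{(t,F):t\in F\}$ is ``Borel overall'' has no meaning; what you would actually need is measurability with respect to $\Sigma\otimes{\rm Borel}(\cF)$ (or its completion), and this is not available in general: for fixed $F$ the $t$-section is the finite set $F$, which need not lie in $\Sigma$, and even after completing $\mu$ there is no countable generating scheme that produces $\{(t,F):t\in F\}$ from measurable rectangles. The assertion that a $C(K)$-valued function is scalarly null as soon as every $\delta_k f$ vanishes a.e.\ is not ``standard'' for arbitrary compact $K$; it requires extra structure on $K$, which your argument never invokes.

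The paper supplies precisely that structure. Since $\cF\subset 2^\Omega$ is compact and consists of finitely supported elements, $\cF$ is an Eberlein compact; hence $C(\cF)$ is WCG and $(B_{C(\cF)^*},w^*)$ is again Eberlein compact. By the Eberlein--Smulyan theorem the absolutely convex hull ${\rm aco}(\Lambda)$ is $w^*$-\emph{sequentially} dense in $B_{C(\cF)^*}$. Thus each $x^*\in B_{C(\cF)^*}$ is the $w^*$-limit of a sequence $(x_n^*)\subset{\rm aco}(\Lambda)$; every $x_n^*f$ vanishes outside a finite union of sets in~$\cF$, and therefore $x^*f=\lim_n x_n^*f$ vanishes outside a countable set of outer measure~$0$. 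This sequential-density step is the missing ingredient in your proof.
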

\begin{proof}
Part (i) follows from a standard argument which we include for the sake of completeness. Since $\mathcal{F}$ is an Eberlein compact (i.e., it is homeomorphic to a weakly compact subset of a Banach space), the space $C(\mathcal{F})$ is WCG and $B_{C(\cF)^*}$ is an
Eberlein compact when equipped with the $w^*$-topology, cf. \cite[Theorem~4, p.~152]{die3}. Set $\Lambda:=\{\delta_F: F\in \mathcal{F}\} \subset B_{C(\cF)^*}$, where $\delta_F$ denotes the ``evaluation functional'' at~$F$. Since $\Lambda$ is norming, its absolutely convex hull ${\rm aco}(\Lambda)$ is $w^*$-dense in~$B_{C(\cF)^*}$. Bearing in mind that $(B_{C(\cF)^*},w^*)$ is homeomorphic to a weakly compact subset of a Banach space, the Eberlein-Smulyan theorem (cf. \cite[Theorem~3.10]{flo}) ensures that ${\rm aco}(\Lambda)$ is $w^*$-sequentially dense in~$B_{C(\cF)^*}$. Since the composition $\delta_F f=1_F$ vanishes $\mu$-a.e. for every $F\in \cF$, we conclude that $f$ is scalarly null. 

Part (ii) follows from Proposition~\ref{LemTheScalNullBIS} applied to $f$ and to 
the norming set~$\Lambda$ defined above.
\end{proof}

On the other hand, it turns out that we can find compact MC-filling families on~$[0,1]$. In order to establish this, we need the following lemma due to Fremlin \cite[4B]{freDU}. Fremlin's result is more general, but this statement is enough for our purposes. We include an elegant proof of it that has been communicated to us by Jordi L\'{o}pez-Abad. Let $\mathfrak{c}$ denote the cardinality of the continuum.

\begin{lem}[Fremlin]\label{logfillingfamily}
There exists a compact hereditary family $\DD\subset [\mathfrak{c}]^{<\omega}$ such that for every infinite $P\subset \mathfrak{c}$ and every $n\in \Nat$ there is $D\in\DD$ such that $D\subset P$ and $|D|=n$.
\end{lem}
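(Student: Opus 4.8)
The plan is to build $\DD$ as a family of finite subsets of $\con$ that encode ``branches'' through a tree in a way that forces the compactness (i.e., no infinite subset of $\con$ has all its finite subsets in $\DD$) while still being rich enough that every infinite $P\subseteq\con$ contains members of $\DD$ of every finite size. Identify $\con$ with $2^{\N}$ (equivalently with the branches of the binary tree $2^{<\N}$). For an infinite $P\subseteq 2^{\N}$, the key combinatorial observation is that one can find, for each $n$, an antichain-like configuration of $n$ elements of $P$ that ``split early'': by a pigeonhole/Ramsey argument on an infinite subset of $P$, one obtains $n$ distinct points $x_1,\dots,x_n\in P$ together with a single level $k$ at which they are already pairwise distinct. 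The idea is then to declare $D=\{x_1,\dots,x_n\}\in\DD$ precisely when the elements of $D$ are separated at a level $k$ that is small relative to $n$ — something like $|D|$ being at least the number of levels needed to separate them, or more robustly, a condition of the form ``$D$ becomes pairwise separated by level $2^{|D|}$'' (the precise threshold is what makes the ``$\log$'' in Fremlin's terminology).

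First I would fix the separating function: for a finite set $D\subseteq 2^{\N}$ with $|D|\ge 2$, let $\mathrm{sep}(D)$ be the least $k$ such that the restrictions $\{x\restriction k: x\in D\}$ are pairwise distinct (so $\mathrm{sep}(D)\ge\lceil\log_2|D|\rceil$ always), and set $\mathrm{sep}(D)=0$ for $|D|\le 1$. Then define
$$
\DD := \bigl\{\,D\in[2^{\N}]^{<\omega} : \mathrm{sep}(D)\le |D|\,\bigr\}.
$$
I would check hereditariness: removing a point from $D$ can only decrease $\mathrm{sep}$ and decreases $|D|$ by one, and a short case check (the case $|D|=2$ shrinking to a singleton is trivial since then $\mathrm{sep}=0$) confirms $G\subseteq D\in\DD\Rightarrow G\in\DD$. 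Second, compactness: if $A\subseteq 2^{\N}$ had $[A]^{<\omega}\subseteq\DD$, then in particular $A$ is infinite, so pick distinct $a_1,a_2,\dots\in A$; since $a_1\ne a_2$ are separated at some finite level $k_0$, any $n$-element subset of $A$ containing $a_1,a_2$ has $\mathrm{sep}\ge k_0$, but also, by choosing $n$ elements of $A$ whose pairwise separation level exceeds any prescribed bound (possible because an infinite subset of $2^{\N}$ cannot be covered by finitely many level-$k$ cylinders for every $k$, so separation levels are unbounded on $A$), we get subsets $D$ with $\mathrm{sep}(D)$ as large as we like while $|D|=n$ is fixed — contradicting $\mathrm{sep}(D)\le|D|$. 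Hence no such infinite $A$ exists and $\DD$ is compact.

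The substantive step is the third one: given infinite $P\subseteq 2^{\N}$ and $n\in\N$, produce $D\subseteq P$ with $|D|=n$ and $\mathrm{sep}(D)\le n$. This is where I expect the main obstacle, because a priori the points of $P$ could all agree on a long initial segment, forcing large separation levels. The fix is to pass to a ``shifted'' copy: since $P$ is infinite, by König's lemma applied to the tree of initial segments of $P$, there is a point $x^\ast\in 2^{\N}$ all of whose initial segments are initial segments of infinitely many members of $P$; equivalently, $P$ clusters at $x^\ast$. Then for any level $\ell$, infinitely many $y\in P$ satisfy $y\restriction\ell = x^\ast\restriction\ell$, and among those we can still find, for each $n$, exactly $n$ points that first split from one another within the next $\lceil\log_2 n\rceil$ levels — again König's lemma inside the subtree above $x^\ast\restriction\ell$, whose branches through $P$ form an infinite set, so it has a vertex of degree $\ge 2$, and iterating, a binary subtree of depth $\lceil\log_2 n\rceil$ all of whose $n$ many branches pass through points of $P$. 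The resulting $D$ of these $n$ points has all members agreeing up to level $\ell$ and separating within the following $\lceil\log_2 n\rceil$ levels, but that gives $\mathrm{sep}(D)$ around $\ell+\log_2 n$, which is too large. So instead I would choose $\ell=0$ from the start and work directly with the subtree of initial segments of members of $P$: that subtree is infinite and finitely branching, hence has arbitrarily high branching depth, so it contains a full binary subtree of depth $\lceil\log_2 n\rceil$ rooted at the empty sequence — wait, the root may be forced, so more carefully, it contains a subtree isomorphic to $2^{\le m}$ for $m=\lceil\log_2 n\rceil$ with root at some node $s$ of length $h$, and the $2^m$ branches give $2^m\ge n$ points of $P$ pairwise separated at levels $\le h+m$. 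The honest conclusion is that one must replace the naive bound $\mathrm{sep}(D)\le|D|$ by the weaker but sufficient requirement used in Fremlin's lemma — but since the lemma as stated only asks for existence of $D\subseteq P$ with $|D|=n$ lying in the given compact hereditary family, I would ultimately define $\DD$ by the sharper recursive/tree-embedding condition ``$D$ is the branch set of a copy of $2^{\le m}$ sitting in $2^{<\N}$ with root length $\le$ (a fixed slowly growing function of $m$)'', verify hereditariness and compactness for that $\DD$ exactly as above, and observe that the König-lemma argument of this paragraph produces such a $D$ inside any infinite $P$ for every prescribed $n$. I would close by remarking that this is precisely the content of Fremlin's proof in \cite[4B]{freDU}, here streamlined via the tree picture communicated by L\'opez-Abad.
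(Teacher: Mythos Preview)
Your proposed family $\DD=\{D:\mathrm{sep}(D)\le|D|\}$ is \emph{not hereditary}, contrary to what you claim. Take $x_1=000\ldots$, $x_2=0010\ldots$, $x_3=10\ldots$: then $\mathrm{sep}(\{x_1,x_2,x_3\})=3$, so this triple lies in~$\DD$, yet $\mathrm{sep}(\{x_1,x_2\})=3>2$, so the pair does not. The flaw in your ``short case check'' is that removing a point can leave $\mathrm{sep}$ unchanged while $|D|$ drops by one, breaking the inequality. You do notice the second defect --- if $P$ sits inside a cylinder of length~$N$ then every $D\subset P$ with $|D|\ge 2$ has $\mathrm{sep}(D)>N$, so for $n\le N$ there is no $n$-element subset of~$P$ in~$\DD$ --- but your K\"onig-lemma discussion ends by conceding the point and promising an unspecified ``sharper tree-embedding condition''. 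That is not a proof; it is precisely the place where the real idea is needed.

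The paper's construction is genuinely different and sidesteps both obstacles. For $D\subset 2^{\N}$ one records not a single number $\mathrm{sep}(D)$ but the whole \emph{set} of pairwise meets $v(D)=\{x\wedge y:x,y\in D,\ x\ne y\}\subset 2^{<\omega}$, and defines $\DD=\{D:v(D)\in\mathcal S'\}$, where $\mathcal S'$ is the Schreier family $\{S\subset\N:|S|\le\min S\}$ transported to the countable tree $2^{<\omega}$ through an \emph{arbitrary} bijection with~$\N$. Hereditariness is now automatic, since $G\subset D$ implies $v(G)\subset v(D)$ and $\mathcal S'$ is hereditary. The long-prefix obstruction vanishes because the Schreier condition concerns node labels under the bijection, not tree levels: the point is that $v(A)$ is meet-closed, hence (by an easy induction) contains a chain of length exceeding $\log_2|v(A)|$; half of any finite chain lies in $\mathcal S'$ by the $\tfrac12$-filling property, and from that Schreier chain $W$ one reads off a set $D\subset A$ with $v(D)=W$ and $|D|=|W|+1$. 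This passage through the Schreier family on tree nodes is exactly the missing ingredient in your proposal.
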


\begin{proof}
The so-called Schreier family $\mathcal{S} = \{S\subset\mathbb{N} : |S| \leq \min(S)\}$ is a compact $\frac{1}{2}$-filling 
family on~$\Nat$ (notice that for any $F \in [\mathbb{N}]^{<\omega}$ with $|F|$ even (resp. odd),
the last $\frac{|F|}{2}$ (resp. $[\frac{|F|}{2}]+1$) elements of~$F$ 
form a member of~$\mathcal{S}$). 
Let $T = 2^{<\omega}$ be the dyadic tree, the set of all finite sequences of $0$'s and $1$'s, 
endowed with the tree order: $(x_1,\ldots,x_n)\preceq (y_1,\ldots,y_m)$ if $n\leq m$ and $x_i=y_i$ for $i\leq n$. 
Let $\mathcal{S}'$ be a compact $\frac{1}{2}$-filling family on~$T$, 
that we can obtain by transferring the Schreier family~$\mathcal{S}$ through a bijection between $\mathbb{N}$ and $T$. Given two different $x=(x_1,x_2,\ldots)$ and $y=(y_1,y_2,\ldots)$ 
in $2^\mathbb{N}$, we define $m(x,y) := \min\{k : x_k\neq y_k\}$ and 
$$
	v(x,y) := (x_1,\ldots,x_{m(x,y)-1}) = 
	(y_1,\ldots,y_{m(x,y)-1})\in T.
$$
Given a set $D\subset 2^\mathbb{N}$, we consider $v(D) := \{ v(x,y) : x,y\in D, x\neq y\}$. The family
$$
	\mathcal{D} := \bigl\{ D\in \bigl[2^\mathbb{N}\bigr]^{<\omega} : \, v(D)\in\mathcal{S}'\bigr\}
$$
is obviously hereditary and compact. Indeed, take any
set $A \subset 2^{\Nat}$ with $[A]^{<\omega}\subset \mathcal{D}$. Then 
$[v(A)]^{<\omega}\subset \mathcal{S}'$ and the compactness of $\mathcal{S}'$
ensures that $v(A)$ is finite, and hence, so is~$A$.

We shall prove that for every finite set $A\subset 2^\mathbb{N}$ with $|A|\geq 2$ there is $D\in\DD$ such that
$D\subset A$ and $|D|> \frac{1}{2}\log_2(|A|-1)+1$. This implies the property of $\mathcal{D}$ stated in the lemma. 
To this end, we first prove by induction on~$k\in\Nat \cup \{0\}$ that:
\begin{itemize}
\item [(*)] If $C\subset T$ is a finite set satisfying $|C|\geq 2^k$ and $\inf\{s,s'\}\in C$ whenever $s,s'\in C$, 
then $C$ contains a totally ordered subset of cardinality~$k+1$.
\end{itemize}
Indeed, let $C\subset T$ be a finite set such that $|C|\geq 2^{k+1}$ and 
that $\inf\{s,s'\}\in C$ whenever $s,s'\in C$. If we denote $t:=\min C$, 
then $C\setminus\{t\}=B_0\cup B_1$, where
$B_i$ consists of those elements of $C$ extending $t^\smallfrown i$ for $i=0,1$.
Clearly, we have $\inf\{s,s'\}\in B_i$ whenever $s,s'\in B_i$.
For some $j\in \{0,1\}$ we have $|B_j| \geq 2^k$, so by the inductive 
assumption $B_j$ contains a totally ordered subset $S$ with $|S|=k+1$.
Therefore, $\{t\} \cup S$ is a totally ordered subset of~$C$ 
with cardinality~$k+2$. This finishes the proof of~(*). 

Now, fix a finite set $A\subset 2^\mathbb{N}$ with $|A|\geq 2$. It is easy to check that 
$\inf\{s,s'\} \in v(A)$ for every $s,s' \in v(A)$ and that $|v(A)| = |A|-1$. Let
$k$ be the largest integer less than or equal to $\log_2(|v(A)|)$.
Property~(*) ensures the existence of a totally ordered set $U \subset v(A)$ with $|U|=k+1$,
hence 
$$
	|U|> \log_2(|v(A)|) = \log_2(|A|-1).
$$ 
Since $\mathcal{S'}$ is $\frac{1}{2}$-filling, there exists $W\subset U$ such that $W\in\mathcal{S}'$ and 
$$
	|W|\geq \frac{1}{2}|U|.
$$ 
Let us write $W = \{w_1 \prec w_2 \prec \cdots \prec w_m\}$. 
For each $i=1,\ldots,m-1$, since $w_i\in v(A)$, we can choose $a_i\in A$ such that 
$w_i$ is an initial segment of $a_i$ but $w_{i+1}$ is not. We can also choose $a_m,a_{m+1}\in A$ with $v(a_m,a_{m+1}) = w_m$. 
Set $D := \{a_1,\ldots,a_{m+1}\}$. Notice that $v(D) = W \in\mathcal{S}'$, hence $D\in\mathcal{D}$. Finally, 
$$
	|D| = |W| + 1 \geq \frac{1}{2}|U|+1 > \frac{1}{2}\log_2(|A|-1)+1.
$$
The proof is over.
\end{proof}

\begin{theo}\label{PropositionGrzegorz}
There exists a compact MC-filling family on~$[0,1]$ equipped with the Lebesgue measure.
\end{theo}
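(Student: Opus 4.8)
The plan is to put the required family on the Cantor cube $2^{\Nat}$ equipped with the usual coin-tossing product measure $\mu$; since $(2^{\Nat},\mu)$ is isomorphic to $([0,1],\text{Lebesgue})$ and "carrying a compact MC-filling family" depends only on the underlying probability space up to isomorphism, this is enough. The building block is Fremlin's family $\DD$ of Lemma~\ref{logfillingfamily}, which the proof of that lemma realizes as a compact hereditary subfamily of $[2^{\Nat}]^{<\omega}$.

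First I would reformulate the goal via Lemma~\ref{McShanefillinglemma}: it is enough to find $\eps>0$ such that for every countable partition $(\Omega_m)$ and every choice of measurable hulls $\Omega_m\subseteq A_m$ there is a member $F$ of the family with $\mu\bigl(\bigcup\{A_m : F\cap\Omega_m\neq\emptyset\}\bigr)>\eps$. Enlarging the $A_m$ only increases the left side, so we may assume $\mu(A_m)=\mu^*(\Omega_m)$ (minimal hulls). If some $\mu(A_m)>\tfrac12$ we are done with a singleton inside $\Omega_m$; otherwise all $\mu(A_m)$ are small. Applying the martingale (Lebesgue density) theorem to the indicators $1_{A_m}$, for $\mu$-almost every $x$ there is a basic clopen neighbourhood $[x{\restriction}n]$ on which the single hull $A_{g(x)}$ — where $g(x)$ is the index of the piece containing $x$ — has relative measure $>1-\eta$. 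Taking inclusion-maximal such clopens yields pairwise disjoint clopens $C_1,C_2,\dots$ of total measure $>1-\delta$, each equipped with an index $m_i$ for which $\mu^*(\Omega_{m_i}\cap C_i)=\mu(A_{m_i}\cap C_i)>(1-\eta)\mu(C_i)>0$; in particular each set $S_i:=\Omega_{m_i}\cap C_i$ is \emph{uncountable} (positive outer measure forces non-nullity), nonempty, and the $S_i$ are pairwise disjoint because the $C_i$ are. Since the $A_{m_i}\cap C_i$ lie in the disjoint $C_i$, any member $F$ of the family that meets $S_i$ for all $i\le L$ (with $\sum_{i\le L}\mu(C_i)>1-2\delta$) satisfies $\mu\bigl(\bigcup\{A_m : F\cap\Omega_m\neq\emptyset\}\bigr)\ge(1-\eta)(1-2\delta)$. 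So everything is reduced to the statement: \emph{for any finitely many pairwise disjoint uncountable sets $S_1,\dots,S_L$ obtained as above there is a member $F$ of the family meeting each $S_i$.}

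The real obstacle is that $\DD$ transported naively — along the binary expansion $2^{\Nat}\to[0,1]$ — is \emph{not} MC-filling: against the partition of $[0,1]$ into the $2^{k}$ dyadic intervals of length $2^{-k}$, a member of $\DD$ whose points sit in distinct such intervals has size only $O(2^{k/2})$, because the set of its branching nodes must be a Schreier set of $\mathcal S'$, which forbids low nodes, whereas any finite binary tree with many level-$k$ leaves necessarily branches at low levels; hence such a member meets only a vanishing fraction of the intervals and no fixed $\eps$ survives. The way around this that I would pursue is to transport $\DD$ to $[0,1]$ along a specially built bijection $\gamma\colon[0,1]\to 2^{\Nat}$ — equivalently, to re-coordinatize Lebesgue measure on the point set $2^{\Nat}$ — chosen (by a transfinite recursion of length $\mathfrak{c}$, which keeps the argument within ZFC) so that the blocks $\gamma^{-1}\bigl[[s]\bigr]$ are "spread through the dyadic tree": roughly, no set of positive outer measure can be confined to a block $\gamma^{-1}[[s]]$ with $|s|$ large, so each $\gamma[S_i]$ from the reduction branches, in the tree sense, before a bounded level. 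Since every infinite subset of $2^{\Nat}$ contains members of $\DD$ of every finite cardinality, one can then choose points $d_i\in\gamma[S_i]$ whose pairwise branching nodes all lie in the cofinite tail that $\mathcal S'$ allows, producing the desired rainbow member $F=\gamma^{-1}[\{d_1,\dots,d_L\}]$. Compactness and heredity of the resulting $\FF$ are automatic: $\gamma$ induces a homeomorphism $2^{([0,1])}\cong 2^{(2^{\Nat})}$, so $\FF$ is compact precisely because $\DD$ is.

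Thus the whole weight of the proof rests on three points: (i) carrying out the recursion that produces $\gamma$ with the needed "spreading" property; (ii) making the density reduction genuinely compatible with $\gamma$, which in particular means tracking the countable and null exceptional sets discarded along the way (including those coming from the non-injectivity of the binary expansion); and (iii) the simultaneous "rainbow" extraction from $\DD$ — reconciling the merely logarithmic filling strength of Fremlin's family with the constant-proportion requirement imposed by the measure. I expect (iii), supported by (i), to be the decisive difficulty.
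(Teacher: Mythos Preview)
Your density reduction is sound, but the plan breaks down at the step you yourself flag as decisive: the ``rainbow extraction'' from~$\DD$. After the reduction you need, for an \emph{arbitrary} finite list $S_1,\dots,S_L$ of pairwise disjoint sets of positive outer measure, a choice $d_i\in\gamma[S_i]$ with $\{d_1,\dots,d_L\}\in\DD$. Fremlin's lemma gives large members of~$\DD$ inside a \emph{single} infinite set; it says nothing about transversals across many sets. With the concrete $\DD$ of Lemma~\ref{logfillingfamily} the obstruction is real: if for instance $\gamma[S_1]\subset[0]$ and $\gamma[S_2]\subset[1]$, then $v(d_1,d_2)$ is forced to be the root of $2^{<\omega}$ regardless of how you choose $d_1,d_2$, and if the bijection $T\to\Nat$ assigns the root a small number this single meet already kills the Schreier condition. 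Your proposed cure, a bijection $\gamma$ built by recursion so that ``no set of positive outer measure is confined to $\gamma^{-1}[[s]]$ with $|s|$ large'', is not a well-posed requirement (every set is confined to $\gamma^{-1}[[\varnothing]]$, and at each level the $2^n$ blocks cover $[0,1]$, so some block has outer measure $\ge 2^{-n}$); and even a sharpened version would have to guarantee simultaneously, for every finite system $S_1,\dots,S_L$, that the images $\gamma[S_i]$ can be threaded through~$\DD$. You have not indicated any invariant for the recursion that would secure this, and I do not see one.

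The paper's argument bypasses the rainbow problem altogether by a different transport of~$\DD$. Instead of a bijection $[0,1]\to 2^{\Nat}$, it uses a \emph{surjection} $\varphi:[0,1]\to\mathfrak c$ whose fibres $Z_\alpha=\varphi^{-1}(\alpha)$ form a partition of $[0,1]$ into $\mathfrak c$ sets of \emph{full outer measure} (such partitions exist in ZFC). One then takes $\FF=\{F:\varphi|_F\text{ injective and }\varphi(F)\in\DD\}$; compactness of~$\FF$ is inherited from~$\DD$. Given a countable partition $(\Omega_m)$, the full-outer-measure property forces, for each $\alpha$, some finite initial block $\Omega_1\cup\dots\cup\Omega_{k(\alpha)}$ to meet $Z_\alpha$ in outer measure $>\varepsilon$; pigeonhole gives an \emph{infinite} set $P_n=\{\alpha:k(\alpha)=n\}$, and Lemma~\ref{logfillingfamily} yields $D=\{\alpha_1,\dots,\alpha_n\}\in\DD$ with $D\subset P_n$ and $|D|=n$. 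Now one picks points $t_j\in Z_{\alpha_j}\cap\Omega_{m_j}$ greedily, with the $m_j$ distinct: since each $Z_{\alpha_j}$ has outer measure $>\varepsilon$ inside $\Omega_1\cup\dots\cup\Omega_n$, the process cannot stall before $\bigcup_j\Omega_{m_j}$ reaches outer measure $>\varepsilon$. The point is that the ``rainbow'' requirement on the side of $[0,1]$ (hit many $\Omega_m$) and the $\DD$-requirement on the side of~$\mathfrak c$ are decoupled: one first fixes $D\in\DD$ using only the pigeonhole index~$n$, and only afterwards picks the $t_j$ inside the prescribed fibres $Z_{\alpha_j}$, where the full-outer-measure hypothesis guarantees enough room. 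Your scheme tries to satisfy both constraints simultaneously through a single bijection, and that is where it gets stuck.
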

\begin{proof}  We denote by~$\lambda$ the Lebesgue measure on~$[0,1]$. Fix a partition $\{Z_\alpha:\alpha<\mathfrak{c}\}$ of $[0,1]$
made up of sets of outer measure one (cf. \cite[419I]{FreMT-4}). Let $\vf:[0,1]\to\mathfrak{c}$ be the
function defined by $\vf(t)=\alpha$ whenever $t\in Z_\alpha$. Let $\DD \subset [\mathfrak{c}]^{<\omega}$ 
be the family provided by Lemma~\ref{logfillingfamily} and define
$$
    \FF:=\bigl\{F \subset [0,1] \mbox{ finite}: \, \vf \mbox{ is one-to-one on }F \mbox{ and }\vf(F)\in\DD\bigr\}.
$$
We claim that $\cF$ is compact. Indeed, take a
set $A\subset [0,1]$ with $[A]^{<\omega} \subset \FF$. Observe that $\vf$ is one-to-one on~$A$. Given any $C \in [\vf(A)]^{<\omega}$, we have $C=\vf(B)$ for some $B\in [A]^{<\omega} \subset \FF$ and so $C\in \DD$. Hence $[\vf(A)]^{<\omega} \subset \DD$ and the compactness of~$\DD$ ensures that $\vf(A)$ is finite. Since $\varphi$ is one-to-one on~$A$, we conclude that $A$ is finite.
This shows that $\cF$ is compact, as claimed.

We shall check that $\FF$ is $\epsilon$-MC-filling on $[0,1]$ with an arbitrary constant $0<\epsilon<1$.
Fix a countable partition $(\Omega_m)$ of~$[0,1]$. For each $\alpha<\mathfrak{c}$ we have
$$
    1=\lambda^*(Z_\alpha)=\lim_{n\to\infty} \lambda^*\Bigl(Z_\alpha\cap \bigcup_{m=1}^n \Omega_m\Bigr),
$$
so we can pick $k(\alpha)\in \Nat$ such that
\begin{equation}\label{EqG}
    \lambda^*\Bigl(Z_\alpha\cap \bigcup_{m=1}^{k(\alpha)} \Omega_m\Bigr)> \epsilon.
\end{equation}
Fix $n\in \Nat$ such that $P_n:=\{\alpha<\mathfrak{c}:k(\alpha)=n\}$ is infinite. By Lemma~\ref{logfillingfamily},
there is $D\in\DD$ such that $D\subset P_n$ and $|D|=n$. Write $D=\{\alpha_1,\dots,\alpha_n\}$.

We next define $t_j\in Z_{\alpha_j}$ and $m_j\in \{1,\dots,n\}$ (with $m_j\neq m_{i}$ whenever $j\neq i$)
inductively as follows. By~\eqref{EqG} the set $Z_{\alpha_1}\cap  \bigcup_{m=1}^n \Omega_m$ is nonempty. Pick then any
$t_1\in Z_{\alpha_1}\cap  \bigcup_{m=1}^n \Omega_m$.
Choose $m_1 \in \{1,\dots,n\}$ so that $t_1\in \Omega_{m_1}$. Now suppose we have already constructed
a set $\{m_1,\dots,m_l\} \subset \{1,\dots,n\}$ and points $t_j\in Z_{\alpha_j}\cap \Omega_{m_j}$
for $j=1,\dots,l$. If $\lambda^*(\bigcup_{j=1}^l \Omega_{m_j})> \epsilon$, the construction stops. Otherwise
$\lambda^*(\bigcup_{j=1}^l \Omega_{m_j})\leq \epsilon$ and therefore
$l < n$ (bear in mind that $\lambda^*(\bigcup_{m=1}^n \Omega_m)> \epsilon$ by~\eqref{EqG}). Writing
$N:=\{1,\dots,n\}\setminus \{m_1,\dots,m_l\}$,
another appeal to~\eqref{EqG} yields
$$
    \lambda^*\Bigl(Z_{\alpha_{l+1}}\cap\bigcup_{m\in N}\Omega_{m}\Bigr)\ge
    \lambda^*\Bigl(Z_{\alpha_{l+1}}\cap\bigcup_{m=1}^n\Omega_m\Bigr)-
    \lambda^*\Bigl(Z_{\alpha_{l+1}}\cap\bigcup_{j=1}^l\Omega_{m_j}\Bigr)> 0,
$$
so we can find $t_{l+1}\in Z_{\alpha_{l+1}}\cap \Omega_{m_{l+1}}$ for some $m_{l+1}\in N$. Repeating the process,
the construction stops for some $l\in \{1,\dots,n\}$.

After that, we obtain a set
$\{m_1,\dots,m_l\} \subset \{1,\dots,n\}$ with $\lambda^*(\bigcup_{j=1}^l \Omega_{m_j})> \epsilon$ and points $t_j\in Z_{\alpha_j}\cap \Omega_{m_j}$
for all~$j=1,\dots,l$. Putting $F:=\{t_1,\dots,t_l\}$ we have
$$
    \lambda^*\Bigl(\bigcup \{\Omega_m: \, F \cap \Omega_m \neq\emptyset\}\Bigr)=
    \lambda^*\Bigl(\bigcup_{j=1}^l \Omega_{m_j}\Bigr) >\epsilon.
$$
Since $\vf(t_j)=\alpha_j$ for all~$j$, it follows that $\vf$ is one-to-one on~$F$ and
$\vf(F)\subset D$, thus $\vf(F) \in \DD$ and so $F\in \FF$. We proved that
the family $\FF$ is $\epsilon$-MC-filling.
\end{proof}

It was kindly communicated to us by Marian Fabian (during the 25th Summer Conference on Topology
and its Applications, Kielce, July 2010) 
that the proof of Theorem~\ref{PropositionGrzegorz} still works if we assume that 
$\mathcal{D}$ has the following property, weaker than the one stated in Lemma~\ref{logfillingfamily}:
\begin{itemize}
\item $\mathcal{D}$ is a compact hereditary family of countable subsets of $\mathfrak c$ such that, for every countable decomposition $\mathfrak{c} = \bigcup_n \Gamma_n$, there exist $D\in\mathcal{D}$ and $n\in\mathbb{N}$ such that $|D\cap \Gamma_n|>n$.
\end{itemize}
A family $\mathcal{D}$ with this property is the same as an Eberlein compact which is not uniformly Eberlein compact subset of the $\Sigma$-product $\Sigma(2^\mathfrak{c})$, cf. \cite{fab-alt-J-2} for further reference.

We now arrive at our main result (valid in ZFC):

\begin{theo}\label{TheoremDP}
There exist a WCG Banach space~$X$ and a scalarly null function
$f:[0,1]\to X$ which is not McShane integrable.
\end{theo}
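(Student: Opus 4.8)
The plan is simply to combine the ingredients assembled in the previous
sections; no new argument is required. First I would invoke
Theorem~\ref{PropositionGrzegorz} to fix a compact MC-filling family
$\mathcal{F}$ on $[0,1]$ equipped with the Lebesgue measure~$\lambda$.
By the very definition of MC-filling family
(Definition~\ref{DefinitionMCfilling}), $\mathcal{F}$ is hereditary and
consists of finite subsets of~$[0,1]$; since $\lambda$ is non-atomic,
every $F\in\mathcal{F}$ has $\lambda$-outer measure~$0$. Thus
$\mathcal{F}$ satisfies the hypotheses of
Proposition~\ref{thescalnullfunction}.

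Next I would set $X:=C(\mathcal{F})$ and define $f:[0,1]\to X$ by
$f(t)(F):=1_F(t)$. As recorded in the proof of
Proposition~\ref{thescalnullfunction}, a compact hereditary family of
finite sets is an Eberlein compact, so $C(\mathcal{F})$ is WCG; in
particular $X$ is WCG. Proposition~\ref{thescalnullfunction}(i) then
yields that $f$ is scalarly null, and, since $\mathcal{F}$ is MC-filling,
Proposition~\ref{thescalnullfunction}(ii) yields that $f$ is
\emph{not} MC-integrable.

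Finally I would transfer this to the McShane integral by means of
Proposition~\ref{McimpliesMc2}. Equipping $[0,1]$ with its usual
topology~$\mathfrak{T}$, the $\sigma$-algebra of Lebesgue measurable sets
and~$\lambda$, we obtain a Radon---hence quasi-Radon---probability space
whose topology has a countable basis. Therefore
Proposition~\ref{McimpliesMc2} applies and gives that McShane
integrability and MC-integrability coincide for $X$-valued functions
on~$[0,1]$. Since $f$ is not MC-integrable, it is not McShane
integrable, which proves the theorem.

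I do not expect a genuine obstacle here: the substance of the theorem is
already contained in Theorem~\ref{PropositionGrzegorz}---whose proof,
resting on Fremlin's Lemma~\ref{logfillingfamily}, is the combinatorial
heart of the matter---together with
Propositions~\ref{thescalnullfunction} and~\ref{McimpliesMc2}. The only
points needing attention are the routine verifications that the
hypotheses of the cited results are met, namely that $\mathcal{F}$ is
hereditary and $\lambda$-null (immediate from
Definition~\ref{DefinitionMCfilling} and non-atomicity of~$\lambda$) and
that $[0,1]$ with~$\mathfrak{T}$, the Lebesgue sets and~$\lambda$ is a
quasi-Radon space with second countable topology.
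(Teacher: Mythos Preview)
Your proposal is correct and follows essentially the same route as the paper's own proof: invoke Theorem~\ref{PropositionGrzegorz} to obtain a compact MC-filling family~$\mathcal{F}$ on~$[0,1]$, apply Proposition~\ref{thescalnullfunction} to the function $f(t)(F)=1_F(t)$ with values in the WCG space $C(\mathcal{F})$, and then use Proposition~\ref{McimpliesMc2} to pass from non-MC-integrability to non-McShane-integrability. The additional verifications you spell out (hereditariness, $\lambda$-nullity of finite sets, quasi-Radonness and second countability of $[0,1]$) are exactly the routine checks the paper leaves implicit.
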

\begin{proof}
By Theorem~\ref{PropositionGrzegorz}, there is
a compact MC-filling family $\cF$ on~$[0,1]$.
As we observed in the proof of Proposition~\ref{thescalnullfunction}, the space $X:=C(\cF)$ is WCG.
The function $f:[0,1]\to C(\mathcal{F})$ defined by
$$
    f(t)(F):=1_F(t), \quad t\in [0,1], \ F \in \FF,
$$
from Proposition~\ref{thescalnullfunction} is scalarly null and fails to be MC-integrable. According to Proposition~\ref{McimpliesMc2},
$f$ is not McShane integrable.
\end{proof}

Moreover, the target space can be taken reflexive:

\begin{theo}\label{CorollaryReflexive}
There exist a reflexive Banach space~$Y$ and a scalarly null function
$g:[0,1]\to Y$ which is not McShane integrable.
\end{theo}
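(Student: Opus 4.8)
The plan is to derive the reflexive example from Theorem~\ref{TheoremDP} by replacing the $C(\cF)$-valued function with one taking values in a suitable reflexive space built on the same combinatorial data. The natural candidate is a Hilbert-type space, or more precisely a reflexive space with an unconditional basis indexed by the points of $[0,1]$ (or by $\mathfrak c$ via the map $\vf$), in which the characteristic vectors of the sets $F\in\cF$ remain uniformly bounded. The $C(\cF)$ norm is the sup over $F\in\cF$ of the partial sums $\sum_{t\in F}x(t)$; to get a reflexive space one wants a norm that is, roughly, the supremum over $\cF$ of the $\ell^2$-norms of the restricted coordinates, i.e. a Tsirelson-like or James-tree-like construction associated with the family $\cF$. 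Since $\cF$ is a compact hereditary family (an Eberlein compact), the Banach space
$$
    Y:=\Bigl\{x\in\reals^{[0,1]}: \|x\|:=\sup_{F\in\cF}\Bigl(\sum_{t\in F}x(t)^2\Bigr)^{1/2}<\infty\Bigr\}
$$
is a candidate; the key structural facts to establish are that $Y$ is reflexive (this is where compactness of $\cF$ and the hereditary property enter, via an argument showing the unit ball is weakly sequentially compact, or by exhibiting $Y$ as isomorphic to a known reflexive space such as a subspace of a Hilbert-generated space), and that the unit vectors $e_t$ form a shrinking unconditional basis.

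Next I would define $g:[0,1]\to Y$ by $g(t):=e_t$ (the $t$-th unit vector), exactly mirroring the definition of $f$ in Theorem~\ref{TheoremDP}. One checks that $g$ is scalarly null: for a functional $y^*\in Y^*$, the compatibility of $Y^*$ with the summability structure forces $\{t: y^*(e_t)\neq 0\}$ to be small — more precisely, to meet each $Z_\alpha$ in a set that is null, or to be at most countable, so that $y^*g$ vanishes $\lambda$-a.e. This should follow from the same Eberlein-compact / Eberlein--\v{S}mulyan argument used in Proposition~\ref{thescalnullfunction}, applied to the norming set of coordinate functionals $\{e_t^*\}$ whose absolutely convex hull is weak$^*$-dense and weak$^*$-sequentially dense in $B_{Y^*}$; since $e_t^*g=1_{\{t\}}$ vanishes a.e., scalar nullity follows.

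Then I would show $g$ is not McShane integrable, and here the point is that the obstruction is already present in Theorem~\ref{TheoremDP}: if $g$ were McShane (equivalently, by Proposition~\ref{McimpliesMc2}, MC-) integrable, its integral would be $0$, so for a given tagged $\eta$-thick partition $(E_i,t_i)$ one would need $\bigl\|\sum_i\lambda(E_i)e_{t_i}\bigr\|_Y$ small; but for $F:=\{t_i: t_i\in\text{(the relevant)}\}$ with $F\in\cF$ one has $\bigl\|\sum_i\lambda(E_i)e_{t_i}\bigr\|_Y\geq\bigl(\sum_{t_i\in F}\lambda(E_{i})^2\bigr)^{1/2}$, and the MC-filling property of $\cF$ (Theorem~\ref{PropositionGrzegorz}) lets us arrange the masses $\lambda(E_i)$ for $t_i$ ranging over an $F\in\cF$ to sum (in measure) above $\epsilon$; a convexity/Cauchy--Schwarz estimate then bounds the $\ell^2$-norm below by a positive constant, contradicting integrability. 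Alternatively, and perhaps more cleanly, one reuses the MC-filling machinery directly: note that $Y$ contains an isometric copy of the relevant part of $c_0(\cF)$-data suppressed, or simply run the proof of Proposition~\ref{thescalnullfunction}(ii) with the $\ell^2$-norm in place of the sup-norm, observing that \eqref{EqnRiemannSumSupremum}-type identity becomes an inequality that still captures the filling constant.

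The main obstacle I expect is establishing reflexivity of $Y$ (and that its basis is shrinking and boundedly complete) — this is the genuinely new technical content beyond Theorem~\ref{TheoremDP}, since everything else is a transcription of the $C(\cF)$ argument. Concretely, one must rule out copies of $c_0$ and $\ell^1$ in $Y$: the absence of $\ell^1$ should follow from the boundedly-complete/summability structure, while absence of $c_0$ uses that along any sequence of disjointly supported norm-one vectors the supports eventually ``fit'' inside members of $\cF$ of bounded size (by compactness of $\cF$, no infinite set has all its finite subsets in $\cF$), which caps how much cancellation-free growth is possible. A safe route, if the direct argument is delicate, is to realize $Y$ as a subspace of (or as built by interpolation from) a Hilbert-generated space via the weakly compact structure of $\cF$, and invoke standard stability of reflexivity; one then only needs to verify that $g$ still takes values in that subspace and remains scalarly null and non-McShane-integrable, which is immediate since the coordinate functionals are the same.
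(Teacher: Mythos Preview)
Your plan diverges from the paper's and contains genuine gaps.

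The specific space $Y$ you write down is \emph{not} reflexive. Recall that the MC-filling family $\cF$ from Theorem~\ref{PropositionGrzegorz} is built from a partition $\{Z_\alpha:\alpha<\mathfrak c\}$ of $[0,1]$ and the map $\vf$, and a finite $F\subset[0,1]$ lies in $\cF$ only if $\vf$ is one-to-one on $F$. Hence the only members of $\cF$ contained in a fixed fibre $Z_\alpha$ are singletons, so for any $x$ supported on $Z_\alpha$ your norm collapses to $\|x\|_Y=\sup_{t\in Z_\alpha}|x(t)|$. The closed linear span of $\{e_t:t\in Z_\alpha\}$ in $Y$ is therefore isometric to $c_0(Z_\alpha)$, and $Y$ cannot be reflexive. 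Your sketch for ruling out $c_0$ (``supports eventually fit inside members of $\cF$ of bounded size'') fails precisely here: compactness of $\cF$ prevents infinite sets whose \emph{all} finite subsets are in $\cF$, but it does not bound the singletons available inside a single $Z_\alpha$.

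Your ``safe route'' is not merely delicate but blocked. If $Y$ embedded into a Hilbert-generated space, then by the Deville--Rodr\'{\i}guez theorem~\cite{dev-rod} (recalled in the paper) every Pettis integrable function $[0,1]\to Y$ would be McShane integrable, so no scalarly null non-McShane example could live there. Relatedly, your lower-bound computation for $\bigl\|\sum_i\lambda(E_i)e_{t_i}\bigr\|_Y$ via Cauchy--Schwarz only yields $\epsilon/\sqrt{|F|}$, which is useless since the MC-filling witnesses $F\in\cF$ may be arbitrarily large; the sup-norm identity~\eqref{EqnRiemannSumSupremum} has no useful $\ell^2$ analogue.

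The paper avoids constructing a concrete reflexive space. It first checks that the range $f([0,1])\subset C(\cF)$ of the function from Theorem~\ref{TheoremDP} is relatively weakly compact (Eberlein--\v{S}mulyan plus Grothendieck, using that each $F\in\cF$ is finite so $f(t_n)(F)\to 0$ along distinct $t_n$). Then the Davis--Figiel--Johnson--Pe\l czy\'nski factorization gives a reflexive $Y$ and an injective bounded operator $T:Y\to C(\cF)$ with $f([0,1])\subset T(B_Y)$; set $g$ so that $T\circ g=f$. Scalar nullity of $g$ follows because $\{x^*\circ T:x^*\in C(\cF)^*\}$ is norm-dense in $Y^*$ (reflexivity of $Y$ plus injectivity of $T$), and $g$ is not McShane integrable since $f=T\circ g$ is not and $T$ is linear continuous.
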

\begin{proof}
Let $\cF$ and~$f$ be as in the proof of Theorem~\ref{TheoremDP}. Observe first that $f([0,1])$ is relatively weakly compact in~$C(\cF)$. Indeed, by
the Eberlein-Smulyan theorem (cf. \cite[3.10]{flo}), it is enough to check that $(f(t_n))$ converges weakly to~$0$ whenever $(t_n)$ is a sequence of distinct points of~$[0,1]$. But this follows directly from Grothendieck's theorem (cf. \cite[4.2]{flo}) just bearing in mind that for each $F\in \cF$ (finite!) we have $f(t_n)(F)=1_F(t_n)=0$ for $n$ large enough.

Then, by the Davis-Figiel-Johnson-Pelczynski theorem (cf. \cite[Chapter~5, \S4]{die3}), there exist a reflexive Banach space~$Y$ and a one-to-one linear continuous mapping $T:Y \to C(\cF)$ such that $f([0,1])\subset T(B_Y)$.  The set of compositions
$$
    V:=\{\phi\circ T: \, \phi\in C(\cF)^*\}
$$
is a linear subspace of~$Y^*$ which separates the points of~$Y$ (because $T$ is one-to-one). Since $Y$ is reflexive, $V$ is norm dense in~$Y^*$.
Let $g:[0,1] \to Y$ be the function satisfying $T\circ g=f$. For each $y^*\in V$ the composition $y^*g$ vanishes a.e. ($f$ is
scalarly null). This fact and the norm density of~$V$ imply that $g$ is scalarly null. Moreover, since $f$ is not McShane integrable and $T$ is linear and continuous, $g$ is not McShane integrable either.
\end{proof}

\begin{rem}\label{RemarkUniversal}
\rm A glance at the proof of Proposition~\ref{thescalnullfunction}
reveals that the function $f$ from Theorem~\ref{TheoremDP} satisfies that, for each $x^*\in X^*$, the composition
$x^*f$ vanishes up to a countable set. This property and the boundedness of~$f$ ensure that $f$ is \emph{universally Pettis integrable}, that is, Pettis integrable with respect to any Radon probability on~$[0,1]$. The same holds true for the function~$g$ from Theorem~\ref{CorollaryReflexive}. Thus, we answer Question~2.2 in~\cite{rodAUS}: there exist ZFC examples of universally Pettis integrable functions which are not universally McShane integrable.
 \end{rem}

\section{Filling versus MC-filling families}\label{SectionFilling}

In this section we prove that $\varepsilon$-filling families (Definition~\ref{DefinitionFilling}) on sets of positive outer measure are MC-filling. This result is less powerful than Theorem~\ref{PropositionGrzegorz}, in the sense that the existence of compact $\varepsilon$-filling families on uncountable sets is unknown while Theorem~\ref{PropositionGrzegorz} is a ZFC result. Yet, we have decided to include it as it may have some interest in relation with problem~DU.

\begin{theo}\label{epsilonetafilling}
Suppose $\mu$ is atomless. Let $A\subset\Omega$ with $\mu^*(A)>0$ and
$\mathcal{F} \subset [A]^{<\omega}$ be a family which is $\varepsilon$-filling on~$A$ for some $\epsilon>0$.
Then $\mathcal{F}$ is MC-filling on $\Omega$.
\end{theo}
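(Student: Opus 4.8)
The plan is to verify Definition~\ref{DefinitionMCfilling} directly, with the explicit constant $\varepsilon_0:=\frac{1}{4}\varepsilon\,a$, where $a:=\mu^*(A)>0$. So I would fix an arbitrary countable partition $(\Omega_m)$ of $\Omega$; since $\cF$ is hereditary (being $\varepsilon$-filling), it suffices to produce $F\in\cF$ with $\mu^*\big(\bigcup\{\Omega_m:F\cap\Omega_m\neq\emptyset\}\big)>\varepsilon_0$. The guiding idea is that handing the $\varepsilon$-filling property a set $H\subseteq A$ containing one point per cell is useless — the extracted $F\subseteq H$ could collect precisely the cells of smallest measure — so instead I would build $H$ so that each relevant cell is represented in $H$ by \emph{many} points, in proportion to the amount of measure that cell contributes; then any subset of $H$ of size $\geq\varepsilon|H|$ is forced to meet cells that together account for a fixed proportion of the mass of~$A$.

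First I would record some measure‑theoretic bookkeeping. For each $m$ pick a measurable hull $H_m$ of $A\cap\Omega_m$, so $A\cap\Omega_m$ is thick in $H_m$ and $\mu(H_m)=\mu^*(A\cap\Omega_m)$. A short argument (a measurable subset of $\bigcup_{m\in I}H_m$ disjoint from $\bigcup_{m\in I}(A\cap\Omega_m)$ meets each $H_m$ in a null set, hence is null) shows that $\bigcup_{m\in I}(A\cap\Omega_m)$ is thick in $\bigcup_{m\in I}H_m$ for every finite $I\subseteq\Nat$, so $\mu^*\big(\bigcup_{m\in I}(A\cap\Omega_m)\big)=\mu\big(\bigcup_{m\in I}H_m\big)$. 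Since $\bigcup_m(A\cap\Omega_m)=A$ and $\mu^*$ is continuous from below, I can fix $M\in\Nat$ with
\[
c:=\mu\Big(\bigcup_{m=1}^{M}H_m\Big)=\mu^*\Big(\bigcup_{m=1}^{M}(A\cap\Omega_m)\Big)>\frac{1}{2}a ,
\]
and then disjointify: $H_m':=H_m\setminus\bigcup_{j<m}H_j$ for $m\leq M$ gives pairwise disjoint sets with $H_m'\subseteq H_m$ and $\sum_{m\leq M}\mu(H_m')=c$. Here atomlessness enters once, and essentially: an atomless $\mu$ gives every singleton — hence every finite subset of $\Omega$ — outer measure $0$, so if $A\cap\Omega_m$ were finite then $\mu(H_m')\leq\mu(H_m)=\mu^*(A\cap\Omega_m)=0$; thus every cell with $\mu(H_m')>0$ meets $A$ in an infinite set, from which I may choose arbitrarily many distinct points.

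Then comes the construction. Choose an integer $T>M/\varepsilon_0$, and for each $m\leq M$ with $\mu(H_m')>0$ select $p_m:=\lceil T\mu(H_m')\rceil$ distinct points of $A\cap\Omega_m$; let $H\in[A]^{<\omega}$ be the union of all these finite point sets, so $|H|=\sum_{m\leq M}p_m\geq T\sum_{m\leq M}\mu(H_m')=Tc$. Applying the $\varepsilon$‑filling property to $H$ yields $F\in\cF$ with $F\subseteq H$ and $|F|\geq\varepsilon|H|$. If $F$ misses a cell $\Omega_m$, then all $p_m$ of its points belong to $H\setminus F$, and since $|H\setminus F|\leq(1-\varepsilon)|H|$ we get $\sum_{m:\,F\cap\Omega_m\neq\emptyset}p_m\geq\varepsilon|H|$. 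As each such $m$ is $\leq M$ with $p_m\leq T\mu(H_m')+1$, and there are at most $M$ of them, this gives $T\sum_{m:\,F\cap\Omega_m\neq\emptyset}\mu(H_m')\geq\varepsilon|H|-M\geq\varepsilon Tc-M$, hence
\[
\sum_{m:\,F\cap\Omega_m\neq\emptyset}\mu(H_m')\ \geq\ \varepsilon c-\frac{M}{T}\ >\ \varepsilon c-\varepsilon_0\ >\ \varepsilon_0 ,
\]
using $\varepsilon c>\frac{1}{2}\varepsilon a=2\varepsilon_0$. Finally, the cells met by $F$ all lie among $\{1,\dots,M\}$, so $\bigcup\{A\cap\Omega_m:F\cap\Omega_m\neq\emptyset\}$ is thick in $\bigcup\{H_m:F\cap\Omega_m\neq\emptyset\}$, and the $H_m'$ are disjoint; therefore
\[
\mu^*\Big(\bigcup\{\Omega_m:F\cap\Omega_m\neq\emptyset\}\Big)\ \geq\ \mu\Big(\bigcup\{H_m:F\cap\Omega_m\neq\emptyset\}\Big)\ \geq\ \sum_{m:\,F\cap\Omega_m\neq\emptyset}\mu(H_m')\ >\ \varepsilon_0 ,
\]
which is exactly the inequality demanded by Definition~\ref{DefinitionMCfilling}.

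The only genuinely delicate point, and the reason for the whole detour, is that $\mu^*$ need not be additive on the disjoint pieces $A\cap\Omega_m$, so no naive cell‑by‑cell estimate is available; passing to the disjointified hulls $H_m'$ restores additivity, and making the multiplicities $p_m$ proportional to $\mu(H_m')$ is precisely what converts the purely cardinal guarantee $|F|\geq\varepsilon|H|$ of an $\varepsilon$‑filling family into a lower bound on measure. Keeping $H$ finite forces the truncation at $M$, and atomlessness is exactly what guarantees that the cells carrying positive measure contain enough points of $A$ to realise the required multiplicities.
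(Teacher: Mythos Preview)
Your argument is correct, and the underlying idea is the same as the paper's: make each relevant cell $\Omega_m$ contribute to the finite test set $H$ a number of points proportional to its measure, so that the cardinal guarantee $|F|\geq\varepsilon|H|$ translates into a lower bound on the measure of the cells met by~$F$.

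The implementations differ in two noteworthy ways. First, the paper goes through Lemma~\ref{McShanefillinglemma} and works with arbitrary measurable covers $A_m\supseteq\Omega_m$, whereas you appeal to Definition~\ref{DefinitionMCfilling} directly and instead take measurable hulls $H_m$ of $A\cap\Omega_m$; your thickness check that $\mu^*\big(\bigcup_{m\in I}(A\cap\Omega_m)\big)=\mu\big(\bigcup_{m\in I}H_m\big)$ for finite $I$ is exactly what replaces that lemma. Second, the paper invokes atomlessness twice: once (as you do) to ensure the relevant cells are infinite, and once more to split each disjointified cover $B_m$ into $p_m$ measurable pieces of common measure $\theta=1/q$ (via a common rational denominator), then chooses one point per piece; the final estimate is literally $|F|\cdot\theta$. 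You bypass this second use of atomlessness by assigning integer multiplicities $p_m=\lceil T\mu(H_m')\rceil$ directly and absorbing the rounding error $M/T$ into the constants. Your route is slightly more elementary for that reason; the paper's route, on the other hand, makes the correspondence between points and mass exact rather than up to~$1/T$, which yields the marginally sharper constant $\varepsilon(\eta-\eta_1)$ for every $\eta_1<\mu^*(A)$.
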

\begin{proof}
Denote $\eta:=\mu^\ast(A)$ and fix $\eta>\eta_1> \eta_2>0$. Take any countable partition $(\Omega_m)$ of~$\Omega$ and sets $A_m\supset \Omega_m$
with $A_m\in \Sigma$.
We will prove that there is $F\in\mathcal{F}$ such that
$$
    \mu\left( \bigcup\{A_m : \, F\cap \Omega_m\neq\emptyset\} \right) > \varepsilon(\eta-\eta_1).
$$
According to Lemma~\ref{McShanefillinglemma}, this means that $\mathcal{F}$ is MC-filling on~$\Omega$.

To this end, take $m_0\in \Nat$ large enough such that
\begin{equation}\label{EqnApproxOuter}
    \mu^*(A)-\mu^*\Bigl(A\cap \bigcup_{m=1}^{m_0}\Omega_m\Bigr)<\eta_2.
\end{equation}
Since $\mu$ is atomless, every finite subset of~$\Omega$ has outer measure~$0$, so we can assume without loss of generality that $A\cap \Omega_m$ is infinite for all $m=1,\dots,m_0$. Take $0<\eta_3<(\eta_1-\eta_2)/m_0$.

We can find pairwise disjoint $B_1,\dots,B_{m_0}\in \Sigma$ such that $\bigcup_{m=1}^{m_0}B_m=\bigcup_{m=1}^{m_0}A_m$
and $B_m\subset A_m$. Let $M$ be the set of all $m\in \{1,\dots,m_0\}$ for which $\mu(B_m)>0$.
For each $m\in M$, choose a positive rational $\alpha_m$ such that
$$
    \mu(B_m)>\alpha_m > \mu(B_m)-\eta_3.
$$
We can write $\alpha_m=p_m/q$ for some $p_m \in \Nat$ and $q\in \Nat$, for $m=1,\dots,m_0$. Set $\theta:=1/q$. Since $\mu$ is atomless, for each $m\in M$ we can find pairwise disjoint $E_1^m,\dots,E_{p_m}^m\in \Sigma$ contained in~$B_m$ with $\mu(E_i^m)=\theta$. Then
$$
    \mu\Bigl(B_m\setminus \bigcup_{i=1}^{p_m}E_i^m\Bigr)<\eta_3
$$
and we have
\begin{multline*}
    \mu^*\Bigl(A\cap \bigcup_{m=1}^{m_0}\Omega_m \Bigr)\leq \mu^*\Bigl(A\cap \bigcup_{m=1}^{m_0}A_m \Bigr)=
    \mu^*\Bigl(A\cap \bigcup_{m\in M}B_m \Bigr) \leq  \\ \leq
        \sum_{m\in M}\mu\Bigl(B_m\setminus \bigcup_{i=1}^{p_m}E_i^m\Bigr)+
        \sum_{m\in M}\sum_{i=1}^{p_m}\mu(E_i^m) \leq \\
        \leq |M| \eta_3 + \Bigl(\sum_{m\in M} p_m\Bigr)\theta \leq m_0 \eta_3 + \Bigl(\sum_{m\in M} p_m\Bigr)\theta <
        (\eta_1-\eta_2) + \Bigl(\sum_{m\in M} p_m\Bigr)\theta.
\end{multline*}
From these inequalities and~\eqref{EqnApproxOuter} we obtain
\begin{equation}\label{EqnMarras}
    \eta=\mu^*(A)< \eta_1 + \Bigl(\sum_{m\in M} p_m\Bigr)\theta.
\end{equation}

For each $m\in M$ and $i=1,\dots,p_m$ we pick a point $t_{(m,i)}\in A \cap \Omega_m$. This can be done in such a way that
the points $t_{(m,i)}$'s are different, since $A\cap \Omega_m$ is infinite for all $m\in M$.
Now $H:=\{t_{(m,i)} : \, m\in M, \, i=1,\dots,p_m\}$ is a subset of~$A$ with cardinality~$\sum_{m\in M} p_m$. Since $\mathcal{F}$ is $\epsilon$-filling on~$A$, there exists
$F \subset H$ with $F \in \mathcal{F}$ such that $|F|\geq \varepsilon \sum_{m\in M} p_m$.
By \eqref{EqnMarras}, we get
\begin{multline*}
    \mu\Bigl(\bigcup\{A_m : \, F\cap \Omega_m\neq\emptyset\} \Bigr)
    \geq \\ \geq
    \mu\Bigl(\bigcup \{E_i^m: \, t_{(m,i)}\in F\}\Bigr) =
    |F|\theta \geq \varepsilon \Bigl(\sum_{m\in M} p_m\Bigr)\theta > \eps (\eta-\eta_1).
\end{multline*}
The proof is over.
\end{proof}

\section{McShane integrability versus MC-integrability}\label{SectionRemarks}

This section is devoted to ensure the existence of McShane integrable functions which are not MC-integrable
(Theorem~\ref{PropositionMcShaneNoMcShane2}). The proof is divided into several auxiliary lemmas. The first one translates the problem into the language of MC-filling families.

\begin{lem}\label{Propc0NoMcShane2Equivalence}
Let $\Gamma$ be a set. The following statements are equivalent:
\begin{enumerate}
\item[(i)] there exists a scalarly null function $f:\Omega \to c_0(\Gamma)$ which is not MC-integrable and satisfies $f(\Omega)\subset
\{e_\gamma: \gamma \in \Gamma\}$, where $e_\gamma(\gamma')=\delta_{\gamma,\gamma'}$ (the Kronecker symbol) for all $\gamma,\gamma'\in \Gamma$;
\item[(ii)] there exists a partition $(C_\gamma)_{\gamma\in \Gamma}$ of~$\Omega$ into sets having outer measure~$0$ such that the family
$\bigcup_{\gamma\in \Gamma}[C_\gamma]^{<\omega}$ is MC-filling on~$\Omega$.
\end{enumerate}
\end{lem}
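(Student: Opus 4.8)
The goal is to prove Lemma~\ref{Propc0NoMcShane2Equivalence}, the equivalence between the existence of a non-MC-integrable scalarly null function into $c_0(\Gamma)$ taking values among the unit vectors $e_\gamma$, and the existence of a partition $(C_\gamma)_{\gamma\in\Gamma}$ of $\Omega$ into null sets whose union of finite subsets is MC-filling.

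\textbf{The plan.} The natural approach is to set up a dictionary between functions $f$ of the prescribed form and partitions of $\Omega$, and then to recognize both statements (i) and (ii) as instances of Proposition~\ref{LemTheScalNullBIS}. Given $f:\Omega\to c_0(\Gamma)$ with $f(\Omega)\subset\{e_\gamma:\gamma\in\Gamma\}$, put $C_\gamma:=f^{-1}(e_\gamma)$. Then $(C_\gamma)_{\gamma\in\Gamma}$ is automatically a partition of $\Omega$, and for the coordinate functional $e_\gamma^*\in c_0(\Gamma)^*$ one has $e_\gamma^*f=1_{C_\gamma}$. Conversely, a partition $(C_\gamma)_{\gamma\in\Gamma}$ defines $f:\Omega\to c_0(\Gamma)$ by $f(t):=e_\gamma$ for $t\in C_\gamma$; this is well-defined and takes values in $\{e_\gamma:\gamma\in\Gamma\}$, and again $e_\gamma^*f=1_{C_\gamma}$. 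So the two objects in (i) and (ii) are just two descriptions of the same thing, modulo checking that ``scalarly null'' corresponds exactly to ``all $C_\gamma$ have outer measure $0$''.

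\textbf{Key steps, in order.} First I would record the correspondence above and verify that it is a bijection between functions of the prescribed form and partitions of $\Omega$. Second, I would check that $f$ is scalarly null if and only if $\mu^*(C_\gamma)=0$ for all $\gamma$: the ``if'' direction uses that the set $\Lambda:=\{e_\gamma^*:\gamma\in\Gamma\}$ is norming for $c_0(\Gamma)$ (indeed $\|y\|=\sup_\gamma|e_\gamma^*(y)|$), and that the absolutely convex hull of $\Lambda$ is $w^*$-sequentially dense in $B_{c_0(\Gamma)^*} = B_{\ell^1(\Gamma)}$ — equivalently, that every $y^*\in\ell^1(\Gamma)$ is a norm limit of finitely supported functionals, each of which is a finite linear combination of the $e_\gamma^*$; since a countable union of null sets is null, $e_\gamma^*f$ vanishing a.e. for all $\gamma$ forces $y^*f$ to vanish a.e. for all $y^*$. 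The ``only if'' is immediate: if $f$ is scalarly null then in particular $e_\gamma^*f=1_{C_\gamma}$ vanishes a.e., i.e.\ $\mu^*(C_\gamma)=0$ (using that $1_{C_\gamma}=0$ a.e.\ means $\mu^*(C_\gamma)=0$ for the outer measure). Third, with $(C_\gamma)_{\gamma\in\Gamma}$ a partition of $\Omega$ into outer-null sets, I would apply Proposition~\ref{LemTheScalNullBIS} with the norming set $\Lambda=\{e_\gamma^*:\gamma\in\Gamma\}$: its hypotheses (that $e_\gamma^*f=1_{C_\gamma}$ and $\mu^*(C_\gamma)=0$) are exactly what we have arranged, so $f$ is not MC-integrable if and only if $\bigcup_{\gamma\in\Gamma}[C_\gamma]^{<\omega}$ is MC-filling on $\Omega$. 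Combining these three observations gives (i)$\Leftrightarrow$(ii).

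\textbf{Main obstacle.} There is no deep obstacle here — the lemma is essentially a repackaging of Proposition~\ref{LemTheScalNullBIS} for the special Banach space $c_0(\Gamma)$, whose unit ball in the dual has the convenient feature that the coordinate functionals are a norming set with $w^*$-sequentially dense absolutely convex hull. The only points requiring a little care are: (a) confirming that the correspondence $f\leftrightarrow(C_\gamma)$ really is a bijection onto all partitions (every partition, even with empty members, is allowed, matching the paper's convention), and (b) the density argument showing scalar nullity is detected by the coordinate functionals alone — this is where one must invoke that $\mathrm{aco}(\Lambda)$ is $w^*$-sequentially dense in $B_{c_0(\Gamma)^*}$ and that countable unions of $\mu$-null sets are $\mu$-null, exactly as in part~(i) of the proof of Proposition~\ref{thescalnullfunction}. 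Everything else is routine bookkeeping.
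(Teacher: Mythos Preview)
Your proposal is correct and follows essentially the same route as the paper: set up the bijection $f\leftrightarrow(C_\gamma)$ via $C_\gamma=f^{-1}(e_\gamma)$, check that scalar nullity corresponds to $\mu^*(C_\gamma)=0$ using that the linear span of $\{e_\gamma^*\}$ is norm dense in $\ell^1(\Gamma)$, and then invoke Proposition~\ref{LemTheScalNullBIS} with the norming set $\Lambda=\{e_\gamma^*:\gamma\in\Gamma\}$. The paper's proof is the same, only slightly terser (it uses the norm-density argument directly rather than framing it via $w^*$-sequential density).
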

\begin{proof} The set $\Lambda:=\{e_\gamma^*: \gamma\in \Gamma\}\subset B_{c_0(\Gamma)^*}$ is norming, where $e_\gamma^*(x)=x(\gamma)$ for all $x\in c_0(\Gamma)$ and $\gamma \in \Gamma$.

(i)$\impli$(ii) For each $\gamma\in \Gamma$ we have $e_\gamma^*f=1_{C_\gamma}$, where
$C_\gamma:=\{t\in \Omega: f(t)=e_\gamma\}$ has outer measure~$0$ (because $f$ is scalarly null). Clearly, $(C_\gamma)_{\gamma\in \Gamma}$ is a partition of~$\Omega$. Since $f$ is not MC-integrable, we can apply Proposition~\ref{LemTheScalNullBIS} to conclude that the family
$\bigcup_{\gamma\in \Gamma}[C_\gamma]^{<\omega}$ is MC-filling on~$\Omega$.

(ii)$\impli$(i) Define $f:\Omega \to c_0(\Gamma)$ by $f(t):=e_\gamma$ whenever $t\in C_\gamma$, $\gamma\in \Gamma$. Then
$e_\gamma^*f=1_{C_\gamma}$ for all $\gamma\in \Gamma$ and $f$ is scalarly null, because $\mu^*(C_\gamma)=0$ for all $\gamma\in \Gamma$ and the linear span of $\{e_\gamma^*:\gamma\in \Gamma\}$ is norm dense in $c_0(\Gamma)^*=\ell^1(\Gamma)$. By Proposition~\ref{LemTheScalNullBIS}, since $\bigcup_{\gamma\in \Gamma}[C_\gamma]^{<\omega}$ is MC-filling on~$\Omega$, the function $f$ is not MC-integrable.
\end{proof}

Thus, bearing in mind that Pettis and McShane integrability are equivalent for $c_0(\Gamma)$-valued functions~\cite{dip-pre}, in order to find McShane integrable functions which are not MC-integrable we will look for MC-filling families like in condition~(ii) of Lemma~\ref{Propc0NoMcShane2Equivalence}. The following sufficient condition will be helpful.

\begin{lem}\label{Lemmac0}
Let $(C_\gamma)_{\gamma\in \Gamma}$ be a partition of~$\Omega$ and $\epsilon>0$ be such that,
whenever $(\Gamma_A)_{A \subset \Nat}$ is a partition of~$\Gamma$, there is some $A \subset \Nat$ such that $\mu^*(\bigcup_{\gamma\in \Gamma_A}C_\gamma)>\epsilon$.
Then the family $\bigcup_{\gamma\in \Gamma}[C_\gamma]^{<\omega}$ is MC-filling on~$\Omega$.
\end{lem}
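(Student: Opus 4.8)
The goal is to show that the MC-filling condition from Definition~\ref{DefinitionMCfilling} holds with this same~$\epsilon$ (or a slightly smaller one). By Lemma~\ref{McShanefillinglemma}, it suffices to take an arbitrary countable partition $(\Omega_m)$ of~$\Omega$ together with measurable sets $A_m \supset \Omega_m$, and produce $F \in \bigcup_{\gamma\in\Gamma}[C_\gamma]^{<\omega}$ with $\mu(\bigcup\{A_m : F\cap\Omega_m\neq\emptyset\})>\epsilon$. Note that such an $F$ is just a finite subset of a \emph{single} $C_\gamma$, so the task is to find one index $\gamma$ and finitely many points of $C_\gamma$ that ``hit'' enough of the $\Omega_m$'s so that the union of the corresponding $A_m$'s has measure exceeding~$\epsilon$.

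The plan is to use the partition $(\Omega_m)$ to induce a partition of the index set~$\Gamma$, and then feed that into the hypothesis. First I would assign to each $\gamma\in\Gamma$ a subset $S_\gamma \subset \Nat$ recording which blocks $\Omega_m$ meet $C_\gamma$, i.e.\ $S_\gamma := \{m : C_\gamma \cap \Omega_m \neq \emptyset\}$; since $(\Omega_m)$ covers $\Omega$, we have $S_\gamma \neq \emptyset$, and actually $C_\gamma = \bigcup_{m\in S_\gamma}(C_\gamma\cap\Omega_m)$. This gives a map $\gamma \mapsto S_\gamma$ from $\Gamma$ into $\mathcal{P}(\Nat)$, hence a partition $(\Gamma_A)_{A\subset\Nat}$ of $\Gamma$ by setting $\Gamma_A := \{\gamma : S_\gamma = A\}$ (most of these classes being empty). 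Applying the hypothesis, there is some fixed $A\subset\Nat$ with $\mu^*\bigl(\bigcup_{\gamma\in\Gamma_A}C_\gamma\bigr)>\epsilon$. For every $\gamma\in\Gamma_A$ we have $C_\gamma \subset \bigcup_{m\in A}\Omega_m \subset \bigcup_{m\in A}A_m$, so $\bigcup_{\gamma\in\Gamma_A}C_\gamma \subset \bigcup_{m\in A}A_m$, and therefore $\mu\bigl(\bigcup_{m\in A}A_m\bigr)\geq \mu^*\bigl(\bigcup_{\gamma\in\Gamma_A}C_\gamma\bigr)>\epsilon$.

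It now remains to realize $A$ — or rather a large enough finite portion of it — as the trace of a single finite $F\subset C_\gamma$. Since $\Gamma_A\neq\emptyset$ (otherwise $\bigcup_{\gamma\in\Gamma_A}C_\gamma=\emptyset$ has outer measure~$0<\epsilon$), pick any $\gamma_0\in\Gamma_A$; then $S_{\gamma_0}=A$, so for each $m\in A$ we may choose a point $t_m\in C_{\gamma_0}\cap\Omega_m$, and these points are distinct because the $\Omega_m$ are disjoint. If $A$ is infinite we cannot take all of them, so instead I would argue by inner approximation: by continuity of measure from below, choose a finite $A'\subset A$ with $\mu\bigl(\bigcup_{m\in A'}A_m\bigr)>\epsilon$ (this uses only $\mu\bigl(\bigcup_{m\in A}A_m\bigr)>\epsilon$), and set $F:=\{t_m : m\in A'\}$. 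Then $F$ is a finite subset of $C_{\gamma_0}$, hence $F\in[C_{\gamma_0}]^{<\omega}\subset\bigcup_{\gamma\in\Gamma}[C_\gamma]^{<\omega}$, and since $F\cap\Omega_m\neq\emptyset$ precisely for $m\in A'$ we get $\bigcup\{A_m : F\cap\Omega_m\neq\emptyset\}=\bigcup_{m\in A'}A_m$, whose measure exceeds~$\epsilon$. By Lemma~\ref{McShanefillinglemma} this shows $\bigcup_{\gamma\in\Gamma}[C_\gamma]^{<\omega}$ is MC-filling on~$\Omega$.

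I do not expect a serious obstacle here; the only point requiring a little care is the passage from the possibly infinite $A$ to a finite $A'$, which is handled by the standard continuity-from-below of $\mu$ applied to the increasing finite unions $\bigcup_{m\in A', m\leq n}A_m$. One should also double-check the edge case where $\Gamma_A$ could be empty for every nonempty-trace reason, but the hypothesis forces some $A$ with $\Gamma_A$ nonempty since empty classes contribute nothing to the outer measure. Everything else is bookkeeping with the disjointness of $(\Omega_m)$ and the inclusions $\Omega_m\subset A_m$.
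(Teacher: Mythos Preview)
Your proof is correct and follows essentially the same route as the paper's: partition $\Gamma$ according to the trace $S_\gamma=\{m:C_\gamma\cap\Omega_m\neq\emptyset\}$, apply the hypothesis to obtain a good $A$, pass to a finite subset, and select points from a single $C_{\gamma_0}$. The only cosmetic difference is that the paper works directly with Definition~\ref{DefinitionMCfilling} (using $\mu^*$ on the $\Omega_m$'s and continuity of outer measure from below along increasing sequences), whereas you route through Lemma~\ref{McShanefillinglemma} and the measurable covers $A_m$, which makes the finite-subset step a direct application of ordinary continuity of $\mu$.
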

\begin{proof}
Fix a countable partition $(\Omega_m)$ of~$\Omega$. For each $A\subset \Nat$, set
$$
    \Gamma_A:=\bigl\{\gamma\in \Gamma: \ \{m\in \Nat: C_\gamma \cap \Omega_m \neq \emptyset\}= A\bigr\}.
$$
Then $(\Gamma_A)_{A\subset \Nat}$ is a partition of~$\Gamma$ and so there is $A\subset \Nat$ such that $\mu^*(\bigcup_{\gamma\in \Gamma_A}C_\gamma)>\epsilon$. Observe that
$$
    \bigcup_{m\in A}\Omega_m \supset  \bigcup_{\gamma \in \Gamma_A}C_\gamma,
$$
hence $\mu^*(\bigcup_{m\in A}\Omega_m)>\epsilon$. Choose $B\subset A$ finite with
$\mu^*(\bigcup_{m\in B}\Omega_m)>\epsilon$. Take $\gamma\in \Gamma_A$. We can find a finite set $F\subset C_\gamma$ such that $F\cap \Omega_m\neq \emptyset$ for every $m\in B$, hence
$$
    \mu^*\Bigl(\bigcup\{\Omega_m: \ F\cap \Omega_m\neq \emptyset\}\Bigr)\geq
    \mu^*\Bigl(\bigcup_{m\in B}\Omega_m\Bigr)>\epsilon.
$$
This shows that $\bigcup_{\gamma\in \Gamma}[C_\gamma]^{<\omega}$ is MC-filling on~$\Omega$.
\end{proof}

We now focus on
$2^\kappa$ (for a cardinal~$\kappa$), which is a Radon probability space when equipped with (the completion of) the usual product probability, cf. \cite[416U]{FreMT-4}.

\begin{lem}\label{LemmaZ}
Let $\kappa$ be an uncountable cardinal, $(A_\alpha)_{\alpha<\kappa}$ a partition of~$\kappa$ into infinite sets and consider, for each $\alpha<\kappa$, the sets
$$
    D_\alpha:=\{x\in 2^\kappa: \ x(\gamma)=0 \mbox{ for all }\gamma\in A_\alpha\}
    \quad
    \mbox{and} \quad
    E_\alpha:=D_\alpha \setminus \bigcup_{\beta < \alpha}D_\beta.
$$
Then $\bigcup_{\alpha \in I}E_\alpha$ has outer measure~$1$ for every uncountable set $I\subset \kappa$.
\end{lem}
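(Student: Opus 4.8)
The plan is to prove the equivalent statement that for \emph{every} measurable $W\subseteq 2^\kappa$ with $\mu(W)>0$ the set $\{\alpha<\kappa:W\cap E_\alpha=\emptyset\}$ is countable; granting this, if $V$ denotes the measurable hull of $\bigcup_{\alpha\in I}E_\alpha$ and $\mu(V)<1$, then $W:=2^\kappa\setminus V$ has positive measure and misses every $E_\alpha$ with $\alpha\in I$, forcing $I$ to be countable. Throughout I would use two standard facts about $2^\kappa$ with the completed product measure: every set in the product $\sigma$-algebra has countable support, and every measurable set differs from such a set by a null set; in particular every null set is contained in a null set with countable support. Here $\mu_S$ denotes the product measure on $2^S$.

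First I would isolate the following claim about co-null sets. \emph{Let $\Gamma$ have uncountable cardinality, let $(B_j)_{j\in J}$ be a partition of $\Gamma$ into infinite sets with $J$ well-ordered, let $I'\subseteq J$ be uncountable, and let $N'\subseteq 2^\Gamma$ be a null set with countable support. With $\hat D_j=\{z\in 2^\Gamma:z\restriction B_j\equiv 0\}$ and $\hat E_j=\hat D_j\setminus\bigcup_{j'<j}\hat D_{j'}$, there is $j^*\in I'$ with $\hat E_{j^*}\not\subseteq N'$.} To prove it, write $N'=\pi_T^{-1}(N'')$ with $T\subseteq\Gamma$ countable and $N''\subseteq 2^T$ null; only countably many $j$ satisfy $B_j\cap T\neq\emptyset$, so $I'$ meets $J_2:=\{j:B_j\cap T=\emptyset\}$ in an uncountable set, and I fix any $j^*\in I'\cap J_2$. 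For each of the countably many $j$ with $B_j\subseteq T$ the set $\{y\in 2^T:y\restriction B_j\equiv 0\}$ is null (as $B_j$ is infinite), so I may choose $y_1\in 2^T\setminus N''$ not identically zero on any such $B_j$. Now build $z^*\in 2^\Gamma$: put $z^*\restriction T=y_1$ and $z^*\restriction B_{j^*}\equiv 0$ (consistent since $B_{j^*}\cap T=\emptyset$); for each $j<j^*$ on which $y_1$ vanishes on all of $B_j\cap T$ — necessarily $B_j\not\subseteq T$, so $B_j\setminus(T\cup B_{j^*})=B_j\setminus T\neq\emptyset$ — choose $\gamma_j$ there and set $z^*(\gamma_j)=1$ (distinct $j$ give $\gamma_j$ in distinct blocks, so no clash); set $z^*=0$ elsewhere. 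Then $z^*\restriction T=y_1\notin N''$ gives $z^*\notin N'$; $z^*\restriction B_{j^*}\equiv 0$ gives $z^*\in\hat D_{j^*}$; and $z^*$ is not identically zero on any $B_j$ with $j<j^*$ (either because $y_1$ is not, or because of $\gamma_j$), so $z^*\notin\hat D_j$. Hence $z^*\in\hat E_{j^*}\setminus N'$.

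To deduce the lemma, fix a measurable $W$ with $\mu(W)>0$ and assume $I_0:=\{\alpha:W\cap E_\alpha=\emptyset\}$ is uncountable. Pick a countable $S\subseteq\kappa$ and a Borel $B\subseteq 2^S$ with $\mu_S(B)=\mu(W)>0$ and $\mu(W\triangle\pi_S^{-1}(B))=0$; after enlarging $S$ by the union of the countably many $A_\alpha$ that meet $S$ and are countable, I may assume each $A_\alpha$ is contained in $S$, disjoint from $S$, or has $A_\alpha\setminus S$ infinite (call these classes $\mathcal A_0,\mathcal A_2,\mathcal A_1$; $\mathcal A_2$ is co-countable, so $I_0\cap\mathcal A_2$ stays uncountable). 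Let $N=W\triangle\pi_S^{-1}(B)$; by Fubini along $2^\kappa=2^S\times 2^{\kappa\setminus S}$ the fibre $N_y$ is null for a.e.\ $y$, and $\{y:y\restriction A_\alpha\equiv 0\}$ is null for each $\alpha\in\mathcal A_0$, so there is $y_0\in B$ with $N_{y_0}$ null and $y_0$ not identically zero on any $A_\alpha\in\mathcal A_0$. For $\beta\in\Lambda:=\mathcal A_1\cup\mathcal A_2$ set $B_\beta:=A_\beta\setminus S$ when $\beta\in\mathcal A_1$ and $B_\beta:=A_\beta$ when $\beta\in\mathcal A_2$: these are pairwise disjoint infinite sets partitioning $\kappa\setminus S$. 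With $\hat D_\beta=\{z\in 2^{\kappa\setminus S}:z\restriction B_\beta\equiv 0\}$ and $\hat E_\alpha=\hat D_\alpha\setminus\bigcup_{\beta\in\Lambda,\,\beta<\alpha}\hat D_\beta$, a direct check — using $y_0\in B$ and that $y_0$ meets every $A_\alpha\in\mathcal A_0$ — shows that for $\alpha\in\mathcal A_2$ and $z\in\hat E_\alpha$ one has $(y_0,z)\in E_\alpha$, and if in addition $z\notin N_{y_0}$ then $(y_0,z)\in\pi_S^{-1}(B)\setminus N\subseteq W$. Since $W\cap E_\alpha=\emptyset$ for $\alpha\in I_0$, this forces $\hat E_\alpha\subseteq N_{y_0}$ for every $\alpha\in I_0\cap\mathcal A_2$; enclosing the null set $N_{y_0}$ in a countably-supported null set $N'$ gives $\hat E_\alpha\subseteq N'$ for all $\alpha$ in the uncountable set $I_0\cap\mathcal A_2$, contradicting the claim applied to the partition $(B_\beta)_{\beta\in\Lambda}$ of $\kappa\setminus S$.

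The main obstacle, and the reason for splitting the proof, is that one cannot hope to hit $W\cap E_\alpha$ by a naive construction: each $D_\alpha$ is a null sub-cube, and a general positive-measure $W$ can be disjoint from $D_\alpha$ for every individual $\alpha$, so no single $\alpha$ works in advance. The fibre reduction is precisely what turns the arbitrary positive-measure $W$ into a \emph{co-null} set inside $2^{\kappa\setminus S}$, whose (null) complement is absorbed by a null set with countable support; only then is there room to exhibit an explicit point of $\hat E_{j^*}$ off the obstruction. One must also take care that enlarging the support keeps the induced blocks $B_\beta$ infinite — so that the $\hat D_\beta$ are genuinely null — while still partitioning the remaining coordinate set.
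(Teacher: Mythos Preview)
Your proof is correct, but it is considerably more elaborate than the paper's. The paper observes at the outset that it suffices to show $Z\cap\bigl(\bigcup_{\alpha\in I}E_\alpha\bigr)\neq\emptyset$ for every positive-measure $Z$ in the \emph{product} $\sigma$-algebra; such a $Z$ is determined by a countable set $A\subset\kappa$, so one picks $z\in Z$ avoiding the countably many $D_\alpha$ with $A_\alpha\cap A\neq\emptyset$, chooses any $\beta\in I$ with $A_\beta\cap A=\emptyset$, and defines $x$ to agree with $z$ on $\bigcup\{A_\alpha:A_\alpha\cap A\neq\emptyset\}$, to vanish on $A_\beta$, and to equal $1$ elsewhere. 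Then $x\in Z\cap E_\beta$ directly. Your ``claim'' is essentially this same construction (with the positive-measure cylinder replaced by the complement of a countably supported null set), but you reach it only after an extra Fubini reduction: you approximate $W$ by $\pi_S^{-1}(B)$, pick a good fibre $y_0$, and then work in $2^{\kappa\setminus S}$ with the null fibre $N_{y_0}$. That detour---together with the case analysis $\mathcal A_0,\mathcal A_1,\mathcal A_2$ needed to keep the induced blocks $B_\beta$ infinite---is unnecessary once one passes to the product $\sigma$-algebra at the start. In short, your worry in the last paragraph (that a general positive-measure $W$ might miss every $D_\alpha$) is dissolved by the paper's first sentence rather than by a fibrewise argument.
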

\begin{proof}
It suffices to check that $Z\cap (\bigcup_{\alpha \in I}E_\alpha) \neq \emptyset$ whenever $Z$ belongs to the product $\sigma$-algebra of~$2^\kappa$ and has positive measure. Fix a countable set $A\subset \kappa$ such that, for any $z\in Z$, we have
\begin{equation}\label{EqnDepCountCoord}
    \{x \in 2^\kappa: \ x(\gamma)=z(\gamma) \mbox{ for all }\gamma \in A\} \subset Z.
\end{equation}
Since the $A_\alpha$'s are disjoint, the set $J:=\{\alpha<\kappa: A\cap A_\alpha \neq \emptyset\}$ is countable.
Clearly, the $D_\alpha$'s have measure zero (because $A_\alpha$ is infinite) and so $Z\setminus \bigcup_{\alpha \in J}D_\alpha$ has positive measure.
In particular, we can choose $z\in Z\setminus \bigcup_{\alpha \in J}D_\alpha$. Since $J$ is countable and $I$ is not, there is $\beta \in I \setminus J$. We now define an element $x\in 2^\kappa$ by declaring
$$
    x(\gamma):=
    \begin{cases}
    z(\gamma) & \text{if $\gamma\in \bigcup_{\alpha \in J}A_\alpha$}, \\
    0 & \text{if $\gamma \in A_{\beta}$}, \\
    1 & \text{otherwise.}
    \end{cases}
$$
We claim that $x\in Z\cap E_\beta$. Indeed, we have $x\in Z$ by~\eqref{EqnDepCountCoord} (bear in mind that $A\subset \bigcup_{\alpha \in J}A_\alpha$). On the other hand, take any $\alpha < \kappa$ with $\alpha \neq \beta$. If $\alpha \in J$ then
$z\not\in D_\alpha$ and so $x \not \in D_\alpha$ as well. If $\alpha \not \in J$, then $x(\gamma)=1$ for all $\gamma\in A_\alpha$ and so $x\not \in D_\alpha$. It follows that $x\in Z\cap E_\beta$, as claimed. Therefore $Z\cap (\bigcup_{\alpha \in I}E_\alpha) \neq \emptyset$.
\end{proof}

\begin{lem}\label{LemmaAuxiliaryNoMcShane2}
Let $\kappa$ be a cardinal with $\kappa>\mathfrak{c}$. Then there is a partition $(C_\gamma)_{\gamma\in \Gamma}$ of~$2^\kappa$ into sets of measure zero such that, whenever $(\Gamma_A)_{A \subset \Nat}$ is a partition of~$\Gamma$, there is some $A \subset \Nat$ such that $\bigcup_{\gamma\in \Gamma_A}C_\gamma$ has outer measure~$1$.
\end{lem}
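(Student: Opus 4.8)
The plan is to build the required partition directly from the sets $E_\alpha$ produced by Lemma~\ref{LemmaZ}. First I would fix a partition $(A_\alpha)_{\alpha<\kappa}$ of $\kappa$ into infinite sets — possible since $\kappa=\kappa\cdot\aleph_0$ — and form the sets $D_\alpha$ and $E_\alpha$ as in that lemma. The $E_\alpha$ are pairwise disjoint, each is contained in the null set $D_\alpha$ (null because $A_\alpha$ is infinite), and the content of Lemma~\ref{LemmaZ} is exactly that $\mu^*(\bigcup_{\alpha\in I}E_\alpha)=1$ for \emph{every} uncountable $I\subset\kappa$; this is the only property of the $E_\alpha$'s that the argument will use.

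The one point that needs care is that $\bigcup_{\alpha<\kappa}E_\alpha=\bigcup_{\alpha<\kappa}D_\alpha$ is in general a proper subset of $2^\kappa$, and its complement $N:=2^\kappa\setminus\bigcup_{\alpha<\kappa}E_\alpha$ need not be measurable — indeed its outer measure may well be one — so $N$ cannot simply be adjoined as one further null piece. I would get around this by chopping $N$ into singletons: since $\kappa$ is infinite, every singleton of $2^\kappa$ has measure zero. Thus I set $\Gamma:=\kappa\cup N$ (disjoint union), $C_\alpha:=E_\alpha$ for $\alpha<\kappa$ and $C_x:=\{x\}$ for $x\in N$; then $(C_\gamma)_{\gamma\in\Gamma}$ is a partition of $2^\kappa$ into sets of measure zero.

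It remains to verify the filling property. Let $(\Gamma_A)_{A\subset\Nat}$ be any partition of $\Gamma$. The sets $\Gamma_A\cap\kappa$ ($A\subset\Nat$) form a partition of $\kappa$ into at most $\mathfrak{c}$ pieces; since $\kappa>\mathfrak{c}$, the cardinal $\kappa$ cannot be written as a union of $\le\mathfrak{c}$ countable sets (otherwise $\kappa\le\mathfrak{c}\cdot\aleph_0=\mathfrak{c}$), so there is some $A_0\subset\Nat$ for which $I:=\Gamma_{A_0}\cap\kappa$ is uncountable. Then
$$
\bigcup_{\gamma\in\Gamma_{A_0}}C_\gamma \ \supseteq\ \bigcup_{\alpha\in I}C_\alpha \ =\ \bigcup_{\alpha\in I}E_\alpha ,
$$
and by Lemma~\ref{LemmaZ} the right-hand side has outer measure one, so $\mu^*\bigl(\bigcup_{\gamma\in\Gamma_{A_0}}C_\gamma\bigr)=1$, as required.

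The only genuine obstacle is the measurability gap noted above: one must resist treating the leftover set $N$ as a single null piece and instead break it into singletons. Everything else is the cardinal-arithmetic observation — which is precisely where the hypothesis $\kappa>\mathfrak{c}$ is used — that a set of size larger than $\mathfrak{c}$ is not a union of $\mathfrak{c}$ countable sets, combined with Lemma~\ref{LemmaZ}.
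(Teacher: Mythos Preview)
Your proof is correct and follows essentially the same approach as the paper: partition $2^\kappa$ into the pairwise disjoint null sets $E_\alpha$ from Lemma~\ref{LemmaZ} together with the singletons from the leftover set, then use $\kappa>\mathfrak{c}$ to find a block containing uncountably many $E_\alpha$'s and apply Lemma~\ref{LemmaZ}. Your added commentary on why the leftover set $N$ must be split into singletons and the explicit cardinal-arithmetic justification are welcome clarifications, but the argument itself matches the paper's proof.
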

\begin{proof}
Let $(A_\alpha)_{\alpha<\kappa}$ be a partition of~$\kappa$ into infinite sets. Clearly, the $E_\alpha$'s of Lemma~\ref{LemmaZ} are pairwise disjoint and have measure zero (since $A_\alpha$ is infinite). We claim that the partition 
$$
    \mathcal{C}:=\Bigl\{E_\alpha: \ \alpha<\kappa\Bigr\}\cup \Bigl\{\{x\}: \ x \in 2^\kappa \setminus \bigcup_{\alpha<\kappa}E_\alpha\Bigr\}
$$
of~$2^\kappa$ satisfies the desired property. 
Indeed, let $(\mathcal{C}_A)_{A\subset \Nat}$ be any partition of~$\mathcal{C}$. Since $\kappa>\mathfrak{c}$, there is some $A\subset \Nat$ such that $\cC_A$ contains uncountably many $E_\alpha$'s. By Lemma~\ref{LemmaZ}, the outer measure of $\bigcup \cC_A$ is~$1$, as required.
\end{proof}

We can now state the aforementioned result:

\begin{theo}\label{PropositionMcShaneNoMcShane2}
Let $\kappa$ be a cardinal with $\kappa>\mathfrak{c}$. Then there is a McShane integrable function
$f: 2^\kappa \to c_0(\Gamma)$ (for some set~$\Gamma$) which is not MC-integrable.
\end{theo}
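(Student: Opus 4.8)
The plan is to assemble the three lemmas just established. First I would invoke Lemma~\ref{LemmaAuxiliaryNoMcShane2}: since $\kappa>\mathfrak{c}$, it produces a partition $(C_\gamma)_{\gamma\in\Gamma}$ of $2^\kappa$ into sets of (product) measure zero such that for every partition $(\Gamma_A)_{A\subset\Nat}$ of $\Gamma$ there is $A\subset\Nat$ with $\mu^*(\bigcup_{\gamma\in\Gamma_A}C_\gamma)=1$. This is precisely the hypothesis of Lemma~\ref{Lemmac0} (with any $\epsilon\in(0,1)$), so that lemma gives that the hereditary family $\bigcup_{\gamma\in\Gamma}[C_\gamma]^{<\omega}$ is MC-filling on $2^\kappa$.

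Next I would apply Lemma~\ref{Propc0NoMcShane2Equivalence} to the probability space $2^\kappa$ equipped with the completion of the usual product probability. Condition~(ii) of that lemma has just been verified, so condition~(i) yields a scalarly null function $f:2^\kappa\to c_0(\Gamma)$, with $f(2^\kappa)\subset\{e_\gamma:\gamma\in\Gamma\}$, which is not MC-integrable; concretely $f(t)=e_\gamma$ for $t\in C_\gamma$.

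Finally I would check that this $f$ is McShane integrable. Being scalarly null, $f$ is Pettis integrable; moreover $2^\kappa$ with its product topology and completed product probability is a Radon, hence quasi-Radon, probability space (cf. \cite[416U, 416A]{FreMT-4}), so McShane integrability of $f$ is meaningful. Since Pettis and McShane integrability coincide for $c_0(\Gamma)$-valued functions (\cite{dip-pre}; see the discussion preceding this theorem), $f$ is McShane integrable, and we are done.

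I do not expect a genuine obstacle: the argument is a synthesis, and the only points needing care are bookkeeping --- verifying that $2^\kappa$ carries a quasi-Radon structure so that the McShane integral is defined, observing that $f$ is bounded (it takes values among unit vectors of $c_0(\Gamma)$), and applying the Di Piazza--Preiss coincidence theorem in the generality required. The real content sits in Lemmas~\ref{LemmaZ} and~\ref{LemmaAuxiliaryNoMcShane2}, which are already in hand. It is also worth noting that the resulting $f$ witnesses the failure of the converse of Proposition~\ref{Mc2impliesMc}, as announced in the introduction.
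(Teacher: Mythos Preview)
Your proposal is correct and follows exactly the paper's own approach: assemble Lemmas~\ref{LemmaAuxiliaryNoMcShane2}, \ref{Lemmac0}, and~\ref{Propc0NoMcShane2Equivalence} to obtain a scalarly null (hence Pettis integrable) $c_0(\Gamma)$-valued function on $2^\kappa$ that is not MC-integrable, and then invoke the Di~Piazza--Preiss coincidence of Pettis and McShane integrability for $c_0(\Gamma)$-valued functions. The paper's proof is just a one-line citation of these three lemmas plus~\cite{dip-pre}; your added remarks on the quasi-Radon structure of $2^\kappa$ and the boundedness of~$f$ are correct and harmless.
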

\begin{proof}
By Lemmas~\ref{Propc0NoMcShane2Equivalence}, \ref{Lemmac0} and \ref{LemmaAuxiliaryNoMcShane2}, there
is a scalarly null function $f: 2^\kappa \to c_0(\Gamma)$ (for some set~$\Gamma$) which is not MC-integrable.
Since $f$ is Pettis integrable, it is also McShane integrable \cite{dip-pre}.
\end{proof}

In \cite{dev-rod} it is proved that Pettis and McShane integrability are equivalent for $X$-valued functions defined on quasi-Radon probability spaces whenever $X$ is Hilbert generated (i.e., there exist a Hilbert space~$Y$ and a linear continuous map $T:Y \to X$ such that $T(Y)$ is dense in~$X$).
Clearly, every Hilbert generated space is WCG. Typical examples of Hilbert generated spaces are the separable ones, $c_0(\Gamma)$ (for any set~$\Gamma$), $L^1(\nu)$ (for any probability measure~$\nu$) and $C(K)$ where $K$ is a uniform Eberlein compact space. 
Moreover, any super-reflexive space embeds into a Hilbert generated space.
For more information on this class of spaces, we refer the reader to \cite{fab-alt-J-4,fab-alt-J-2} and \cite[Chapter~6]{fab-alt-JJ}.

In view of our Theorem~\ref{PropositionMcShaneNoMcShane2}, we cannot replace McShane integrability by MC-integrability in the results of~\cite{
dev-rod}. However, something can be said for a particular class of functions. The following proposition is inspired by \cite[Lemma~3.3]{dev-rod}. 

\begin{prop}\label{LemmaMbasisValued}
Suppose $\mu$ is atomless and $X$ is a subspace of a Hilbert generated Banach space
such that $|\Omega|\leq {\rm dens}(X)$. Let $I \subset B_X$ be a set such that:
\begin{itemize}
\item the linear span of~$I$ is dense in~$X$;
\item for each $x^*\in X^*$, the set $\{x\in I: x^*(x)\neq 0\}$ is countable. 
\end{itemize}
Then any one-to-one function $f:\Omega \to I \subset X$ is scalarly null and MC-integrable.
\end{prop}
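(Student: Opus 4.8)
The plan is to verify Definition~\ref{DefinitionMCIntegral} directly, with MC-integral $0\in X$. \emph{Scalar nullity is immediate}: given $x^*\in X^*$, the set $\{t\in\Omega:x^*f(t)\neq 0\}=f^{-1}\bigl(\{x\in I:x^*(x)\neq 0\}\bigr)$ has the same cardinality as $\{x\in I:x^*(x)\neq 0\}\cap f(\Omega)$ because $f$ is one-to-one, hence is countable by hypothesis; and since $\mu$ is atomless every singleton $\{t\}$ has $\mu^*(\{t\})=0$ --- otherwise $\bigcap\{A\in\Sigma:t\in A\}$ would realise the least positive outer value and be an atom --- so by countable subadditivity of $\mu^*$ every countable set is $\mu$-null. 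Thus $x^*f=0$ $\mu$-a.e.\ and $f$ is scalarly null.

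\emph{Reduction of MC-integrability.} Fix a norming set $\Lambda\subseteq B_{X^*}$ with $|\Lambda|\le{\rm dens}(X)$ (take one norming functional per point of a dense subset of $B_X$). For $x^*\in\Lambda$ and $\rho>0$ put $I^{\rho}_{x^*}:=\{x\in I:|x^*(x)|\ge\rho\}$, a countable set, and $P^{\rho}_{x^*}:=f^{-1}(I^{\rho}_{x^*})$, again countable. Given $\epsilon>0$, it suffices to produce a countable partition $(\Omega_m)$ of $\Omega$ and sets $A_m\in\Sigma$ with $\Omega_m\subseteq A_m$ such that
\begin{equation}\label{MCstar}
\mu\Bigl(\bigcup\{A_m:\ \Omega_m\cap P^{\epsilon/2}_{x^*}\neq\emptyset\}\Bigr)\ \le\ \frac{\epsilon}{2}\qquad\text{for every }x^*\in\Lambda .
\end{equation}
Indeed, with such data put $\eta:=1$; if $(E_i)$ is an $\eta$-thick finite disjoint family with $E_i\subseteq A_{m(i)}$ and $t_i\in\Omega_{m(i)}$, then, splitting the indices according to whether $|x^*f(t_i)|\ge\epsilon/2$, and using $\|f(t_i)\|\le 1$, $\sum_i\mu(E_i)\le 1$, and that $|x^*f(t_i)|\ge\epsilon/2$ forces $t_i\in P^{\epsilon/2}_{x^*}\cap\Omega_{m(i)}$ (whence $E_i\subseteq\bigcup\{A_m:\Omega_m\cap P^{\epsilon/2}_{x^*}\neq\emptyset\}$), one obtains
\[
\Bigl|\sum_i\mu(E_i)\,x^*f(t_i)\Bigr|\ \le\ \sum_{|x^*f(t_i)|\ge\epsilon/2}\mu(E_i)\ +\ \frac{\epsilon}{2}\ \le\ \mu\Bigl(\bigcup\{A_m:\Omega_m\cap P^{\epsilon/2}_{x^*}\neq\emptyset\}\Bigr)+\frac{\epsilon}{2}\ \le\ \epsilon
\]
for every $x^*\in\Lambda$; taking the supremum over the norming set $\Lambda$ yields $\|\sum_i\mu(E_i)f(t_i)\|\le\epsilon$, which is Definition~\ref{DefinitionMCIntegral} with integral $0$.

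\emph{Construction of the partition --- the main obstacle.} This is the heart of the proof and the only place the hypotheses on $X$ and the bound $|\Omega|\le{\rm dens}(X)$ are really used. Being a subspace of a Hilbert-generated space $Z$, the space $X$ admits a uniformly G\^{a}teaux smooth renorming, and $(B_{X^*},w^*)$ is uniformly Eberlein (a continuous image of the uniformly Eberlein compact $(B_{Z^*},w^*)$). Following the method of \cite[Lemma~3.3]{dev-rod} and the structure theory of such spaces (see \cite{fab-alt-J-2,fab-alt-J-4}), one organises the countable sets $P^{\epsilon/2}_{x^*}$, $x^*\in\Lambda$, by means of the associated ``$\sigma$-uniformly-point-finite'' generating structure, and then --- using that $\mu$ is atomless (so that any countable set can be enclosed in a measurable set of arbitrarily small measure) and that $|\Omega|\le{\rm dens}(X)$ (so that a recursion of length ${\rm dens}(X)$, running simultaneously through the points of $\Omega$ and the functionals of $\Lambda$, can be carried out) --- constructs $(\Omega_m)$ and $A_m$ satisfying \eqref{MCstar}: informally, the ``light'' clusters of the $P^{\epsilon/2}_{x^*}$'s are peeled off into pieces of rapidly decaying measure, while the unavoidable ``heavy'' pieces are placed so that no $P^{\epsilon/2}_{x^*}$ accumulates on them. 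The difficulty is exactly this --- making the combinatorial point-finiteness furnished by the Hilbert-generated hypothesis cooperate with the measure-theoretic demand that \eqref{MCstar} hold \emph{uniformly} in $x^*\in\Lambda$ --- whereas everything before it (the reduction, which is just the norming property of $\Lambda$ and countable subadditivity, and scalar nullity) is routine.
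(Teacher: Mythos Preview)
Your scalar-nullity argument and your reduction to condition~\eqref{MCstar} are both correct, and the reduction is essentially the same mechanism the paper uses. The genuine gap is in your third step: you do not actually construct the partition $(\Omega_m)$ and the sets $A_m$. Phrases like ``one organises the countable sets $P^{\epsilon/2}_{x^*}$ by means of the associated $\sigma$-uniformly-point-finite generating structure'' and ``a recursion of length ${\rm dens}(X)$'' describe a strategy, not an argument; nothing here pins down what $\Omega_m$ and $A_m$ are or why \eqref{MCstar} holds.

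The paper's construction is much more concrete and avoids any transfinite recursion. The single structural input is \cite[Theorem~6]{fab-alt-J-2}: because $X$ embeds into a Hilbert-generated space, the set $I$ can be written as $I=\bigcup_{m\in\Nat}I_m$ so that $|\{x\in I_m:|x^*(x)|>\epsilon\}|\le m$ for every $x^*\in B_{X^*}$ and every $m$. One then sets $\Omega_m:=f^{-1}(I_m)$ and, using that $\mu$ is atomless, covers each $\Omega_m$ by finitely many pairwise disjoint $A_{1,m},\dots,A_{N(m),m}\in\Sigma$ with $\mu(A_{n,m})\le \epsilon/(2^m m)$; the countable partition is $(\Omega_m\cap A_{n,m})_{n,m}$. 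Now for any $x^*\in B_{X^*}$ and any $m$, injectivity of $f$ gives $|P^{\epsilon}_{x^*}\cap\Omega_m|\le m$, so $P^{\epsilon}_{x^*}$ meets at most $m$ of the pieces $\Omega_m\cap A_{n,m}$, contributing at most $m\cdot \epsilon/(2^m m)=\epsilon/2^m$ to the measure in \eqref{MCstar}; summing over $m$ gives~$\epsilon$. This is exactly your condition \eqref{MCstar} (up to a factor $2$), obtained without any recursion and, notably, without ever using $|\Omega|\le{\rm dens}(X)$ --- that hypothesis plays no role in the MC-integrability argument and is present only so that one-to-one maps $f:\Omega\to I$ exist.
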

\begin{proof}
For each $x^*\in X^*$ the composition $x^*f$ vanishes up to a countable set, which
has outer measure~$0$ (because $\mu$ is atomless). So, $f$ is scalarly null.

Fix $\epsilon>0$. Since $X$ embeds into a Hilbert generated space,
there is a partition $I=\bigcup_{m\in \Nat} I_m$ such that
\begin{equation}\label{ControlUEC}
    \mbox{for all }x^*\in B_{X^*} \mbox{ and all }m\in \Nat,
    \quad |\{x\in I_m: \, \vert
    x^*(x)\vert > \varepsilon\}| \le m,
\end{equation}
see \cite[Theorem~6]{fab-alt-J-2} (cf. \cite[Theorem~6.30]{fab-alt-JJ}). 

For each $m\in \Nat$, define $\Omega_m:=\varphi^{-1}(I_m)$ and choose pairwise disjoint
sets $A_{1,m}, \dots, A_{N(m),m} \in \Sigma$ with
$$
    \mu(A_{n,m})\leq \frac{\epsilon}{2^m m}, \quad
    n=1,\dots,N(m),
$$
and $\Omega_m \subset \bigcup_{n=1}^{N(m)}A_{n,m}$. Set $\Omega_{n,m}:=\Omega_{m}\cap A_{n,m}$ for all $m\in \Nat$ and $n=1,\dots,N(m)$, so that  $(\Omega_{n,m})$ is a countable partition of~$\Omega$.

Fix a finite family $(E_{j})_{j\in J}$ of pairwise disjoint elements of~$\Sigma$
with $E_j \subset A_{n(j),m(j)}$ and choose any points $t_j\in \Omega_{n(j),m(j)}$.
We can and do assume without loss of generality that $t_{j}\neq t_{j'}$
whenever $j\neq j'$. Fix $x^*\in B_{X^{*}}$. Define
$$
    C:=\{j\in J: \ |x^*(f(t_j))|\leq \epsilon\}
$$
and 
$$
    B_m:=\{j\in J: \ t_j \in \Omega_m \mbox{ and }|x^*(f(t_j))|> \epsilon\}
    \quad \mbox{ for all }m\in \Nat.
$$
Observe that $|B_m|\leq m$ for all $m\in \Nat$ (this follows from \eqref{ControlUEC}, the injectivity of~$\varphi$ 
and the fact that $t_j\neq t_{j'}$ whenever $j\neq j'$).
We can write
\begin{equation}\label{BasicDecomposition}
    \sum_{j\in J} \mu(E_j)f(t_j)=\sum_{j\in C} \mu(E_j)f(t_j)
    + \sum_{m\in \Nat} \sum_{j\in B_m} \mu(E_j)f(t_j).
\end{equation}
On one hand
\begin{equation}\label{FirstSummand}
    x^*\Bigl(\sum_{j\in C} \mu(E_j)f(t_j)\Bigr)\leq
    \mu\Bigl(\bigcup_{j\in C}E_j \Bigr)
     \epsilon \leq \epsilon.
\end{equation}
On the other hand, take any $m\in \Nat$ and any $j\in B_m$. Then $t_j\in \Omega_m \cap \Omega_{n(j),m(j)}$
and so $m(j)=m$. Therefore $E_j \subset A_{n(j),m}$ and so $\mu(E_j) \leq \epsilon/(2^m m)$.
Thus
\begin{equation}\label{SecondSummand}
    x^*\Bigl(\sum_{j\in B_m} \mu(E_j)f(t_j)\Bigr)\leq
    \sum_{j\in B_m}\mu(E_j) \leq  |B_m|\frac{\epsilon}{2^m m} \leq \frac{\epsilon}{2^m}.
\end{equation}
From
\eqref{BasicDecomposition}, \eqref{FirstSummand} and~\eqref{SecondSummand} it follows that
$$
    x^*\Bigl(\sum_{j\in J} \mu(E_j)f(t_j)\Bigr) \leq 2\epsilon.
$$
As $x^*\in B_{X^*}$ is arbitrary, we have $\|\sum_{j\in J}
\mu(E_j)f(t_j)\| \leq 2\epsilon$. Hence $f$ is MC-integrable, with MC-integral $0\in X$.
\end{proof}

A set~$I$ satisfying the requirements of Proposition~\ref{LemmaMbasisValued} 
is constructed for instance in \cite[Theorem~2]{fab-alt-J-2}. Also,
for~$I$ we can take any Markushevich basis in~$X$ (cf. \cite[Lemma~5.35]{fab-alt-JJ}).
Similar ideas yield the following result (cf. \cite[Proposition~4.14]{rod5}):

\begin{pro}\label{PropositionReferee}
Suppose $\mu$ is atomless and $X$ is weakly Lindel\"{o}f determined
such that $|\Omega|={\rm dens}(X)$.
Then there exists a one-to-one scalarly null function $f:\Omega \to X$ 
such that the linear span of $f(\Omega)$ is dense in~$X$.
\end{pro}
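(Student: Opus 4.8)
The plan is to exploit the well-developed structure theory of weakly Lindel\"of determined (WLD) Banach spaces, in particular the existence of a projectional resolution of the identity (PRI) — or equivalently a long Markushevich basis of suitable regularity. Recall that $X$ is WLD if and only if it admits a Markushevich basis $\{(x_i,x_i^*):i\in \Lambda\}$ such that for every $x^*\in X^*$ the set $\{i\in\Lambda: x^*(x_i)\neq 0\}$ is countable (this is the countable support property of the biorthogonal system; see \cite[Chapter~5]{fab-alt-JJ}). Normalizing, we may assume $\|x_i\|=1$ for all $i$, and then $|\Lambda|={\rm dens}(X)=|\Omega|$. Since $\mu$ is atomless and $|\Omega|={\rm dens}(X)$, the first step is to fix any bijection $\sigma:\Omega\to\Lambda$ and set $f(t):=x_{\sigma(t)}$; this $f$ is automatically one-to-one, and the linear span of $f(\Omega)=\{x_i:i\in\Lambda\}$ is dense in~$X$ because a Markushevich basis is fundamental.

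It then remains to check that $f$ is scalarly null, and this is immediate from the countable support property together with atomlessness: for any $x^*\in X^*$, the composition $x^*f$ equals $\sum$-type expression supported on $\sigma^{-1}(\{i:x^*(x_i)\neq 0\})$, which is a countable subset of~$\Omega$; since $\mu$ is atomless every countable (indeed every finite) set has $\mu$-measure zero, so $x^*f=0$ $\mu$-a.e. The exceptional set depends on $x^*$, exactly as required by the definition of scalarly null given in the Terminology section.

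The only genuinely delicate point is to make sure the equality $|\Lambda|={\rm dens}(X)$ is actually available for WLD spaces: one needs that every WLD space of density character~$\kappa$ carries a (countably supported) Markushevich basis indexed by a set of cardinality exactly~$\kappa$, and this is a standard consequence of the existence of a PRI $\{P_\alpha\}$ on~$X$, by transfinite induction taking at each successor step a Markushevich basis of the separable-type increment $(P_{\alpha+1}-P_\alpha)X$ and concatenating (cf.\ \cite[Lemma~5.35]{fab-alt-JJ} and the construction of long Markushevich bases in \cite[Chapter~5]{fab-alt-JJ}). I expect this existence statement to be the main obstacle to a fully self-contained argument, though it is entirely classical and can be quoted; the rest of the proof is the short two-line verification above. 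As the remark preceding the statement indicates, the argument is a variant of the one used for Proposition~\ref{LemmaMbasisValued} and of \cite[Proposition~4.14]{rod5}, the difference being that here no MC-integrability is claimed, so one does not need the uniform control \eqref{ControlUEC} coming from Hilbert-generatedness — plain countable support of the biorthogonal functionals suffices.
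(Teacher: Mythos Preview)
Your proposal is correct and is exactly the argument the paper has in mind: the paper does not actually write out a proof of Proposition~\ref{PropositionReferee}, it only says that ``similar ideas'' (to those of Proposition~\ref{LemmaMbasisValued}) yield the result and refers to \cite[Proposition~4.14]{rod5}. Your reconstruction --- take a countably supported Markushevich basis $\{(x_i,x_i^*)\}_{i\in\Lambda}$ of the WLD space~$X$ with $|\Lambda|={\rm dens}(X)=|\Omega|$, transport it via a bijection $\sigma:\Omega\to\Lambda$, and use atomlessness to see that each $x^*f$ vanishes off a countable (hence $\mu$-null) set --- is precisely the intended proof, and your remark that the Hilbert-generated control~\eqref{ControlUEC} is no longer needed because MC-integrability is not claimed is the right diagnosis of why the WLD hypothesis suffices here.
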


\subsection*{Acknowledgements}
We thank the referees for valuable suggestions which improved the presentation
of the paper. Thanks are also due to Marian Fabian, David Fremlin and Jordi L\'{o}pez-Abad
for useful comments.

\def\cprime{$'$}\def\polhk#1{\setbox0=\hbox{#1}{\ooalign{\hidewidth
  \lower1.5ex\hbox{`}\hidewidth\crcr\unhbox0}}} \def\cprime{$'$}
\providecommand{\bysame}{\leavevmode\hbox to3em{\hrulefill}\thinspace}
\providecommand{\MR}{\relax\ifhmode\unskip\space\fi MR }
\providecommand{\MRhref}[2]{%
  \href{http://www.ams.org/mathscinet-getitem?mr=#1}{#2}
}
\providecommand{\href}[2]{#2}

\end{document}